\documentclass[10pt]{amsart}
\usepackage{latexsym,fancyhdr,amssymb,color,amsmath,amsthm,graphicx,listings,comment}
\usepackage[section]{placeins}
\pagestyle{fancy}
\newtheorem{thm}{Theorem} 
\newtheorem{lemma}{Lemma} \newtheorem{coro}{Corollary}
\let\paragraph\subsection

\setlength{\parindent}{0cm} \setlength{\topmargin}{-1.0cm} \setlength{\headheight}{0.5cm} \setlength{\textheight}{24cm}
\setlength{\oddsidemargin}{0cm} \setlength{\evensidemargin}{0.0cm} \setlength{\textwidth}{16.5cm}

\title{Coloring Discrete Manifolds}
\fancyhead{}
\fancyhead[LO]{\fontsize{9}{9} \selectfont OLIVER KNILL}
\fancyhead[LE]{\fontsize{9}{9} \selectfont COLORING DISCRETE MANIFOLDS}

\author{Oliver Knill}
\date{5/27/21}
\address{Department of Mathematics \\ Harvard University \\ Cambridge, MA, 02138 }
\subjclass{05C15,57K45}
%  05C15 Coloring of graphs and hypergraphs
%  57K45 Higher-dimensional knots and links

% 6/27/2021         last proofread
% 6/7/2021          illustration
% 6/6/2021          a remark about the Whitney embedding
% 6/5/2021          an other office talk on youtube
% 6/4/2021          writing of condition for Fisk manifold
% 6/2/2021          move to new directory
% 5/30/2021         some office talk on youtube
% 5/29/2021         prepare for office talk 
% 5/26/2021         start new file with idea for upper bound

\makeindex

\begin{document}
\maketitle

\begin{abstract}
Discrete $d$-manifolds are classes of finite simple graphs which can triangulate classical
manifolds but which are defined entirely within graph theory.	
We show that the chromatic number $X(G)$ of a discrete $d$-manifold $G$ satisfies $d+1 \leq X(G) \leq 2(d+1)$.
From the general identity $X(A+B)=X(A)+X(B)$ for the join $A+B$ of two finite simple graphs, it follows that there are
$(2k)$-spheres with chromatic number $X=3k+1$ and $(2k-1)$-spheres with chromatic number $X=3k$. 
Examples of $2$-manifolds with $X(G)=5$ have been known since the pioneering work of Fisk.
Current data support the that an upper bound $X(G) \leq \lceil 3(d+1)/2 \rceil$ could hold for all $d$-manifolds $G$, 
generalizing a conjecture of Albertson-Stromquist \cite{AlbertsonStromquist}, stating $X(G) \leq 5$ for all $2$-manifolds. 
For a d-manifold, Fisk has introduced the $(d-2)$-variety $O(G)$. This graph $O(G)$ has maximal simplices of 
dimension $(d-2)$ and correspond to complete complete subgraphs $K_{d-1}$ of $G$ for which the dual 
circle has odd cardinality. In general, $O(G)$ is a union of $(d-2)$-manifolds. We note that if $O(S(x))$ is either empty 
or a $(d-3)$-sphere for all $x$ then $O(G)$ is a $(d-2)$-manifold or empty. 
The knot $O(G)$ is already interesting for $3$-manifolds $G$ because Fisk has demonstrated that every possible knot 
can appear as $O(G)$ for some $3$-manifold. For $4$-manifolds $G$ especially, the Fisk variety $O(G)$ is a 
$2$-manifold in $G$ as long as all $O(S(x))$ are either empty or a knot in every unit 3-sphere $S(x)$. 
\end{abstract}

\section{An overview}

\paragraph{}
A {\bf $d$-manifold} is a finite simple graph $G=(V,E)$ for which every {\bf unit sphere} $S(x)$, 
the graph generated by the neighbors of $x$, is a $(d-1)$-sphere. A {\bf $d$-sphere} is a $d$-manifold
for which one can remove a vertex $x$ and get a contractible graph $G-x$. A graph is {\bf contractible} if there exists a vertex $x$
such that its unit sphere $S(x)$ and the graph $G-x$ without this vertex are both contractible. 
Slating the {\bf empty graph} $0$ as the $(-1)$-sphere and the {\bf $1$-point graph} $1$ to be contractible form the
foundation of these inductive definitions. A $2$-manifold for example is a graph for which every unit sphere is a cyclic graph with $4$ or more vertices. 
What is the chromatic number $X$ of a discrete $d$-manifold? We prove the general 
bound $(d+1) \leq X \leq 2(d+1)$, where the lower bound $X=d+1$ is achieved for any {\bf Barycentric refined manifold} $G_1$
or any product manifold $G \times H$ defined as the Barycentric refinement of the Cartesian product of the Whitney complexes of $G$
and $H$. The upper bound $2d+2$ will follow from the fact that we can cover the dual graph $\hat{G}$ of a d-manifold $G$ with 
two disjoint forests $A,B$, where each is closed in $\hat{G}$ in the sense that its vertex set generates the graph within $G$. 
While for $d=1$, the bound $X \leq 3$ is obviously sharp. We do not know for any dimension $d>1$ whether the 
upper bound $2d+2$ is sharp. A natural conjecture is that in general $X(G) \leq \lceil 3(d+1)/2 \rceil$ 
for all $d$-manifolds $G$, where $\lceil x \rceil$ is the ceiling function. Still, $2d+2$ is also natural if one has an eye
on the classical Nash-Kuiper theorem for $C^1$-embeddings of compact Riemannian manifolds in Euclidean space which would
correspond to embed $G$ in a $(2d+1)$-manifold which can be minimally colored with $2d+2$ colors. 

\begin{figure}[h]
\scalebox{1.0}{\includegraphics{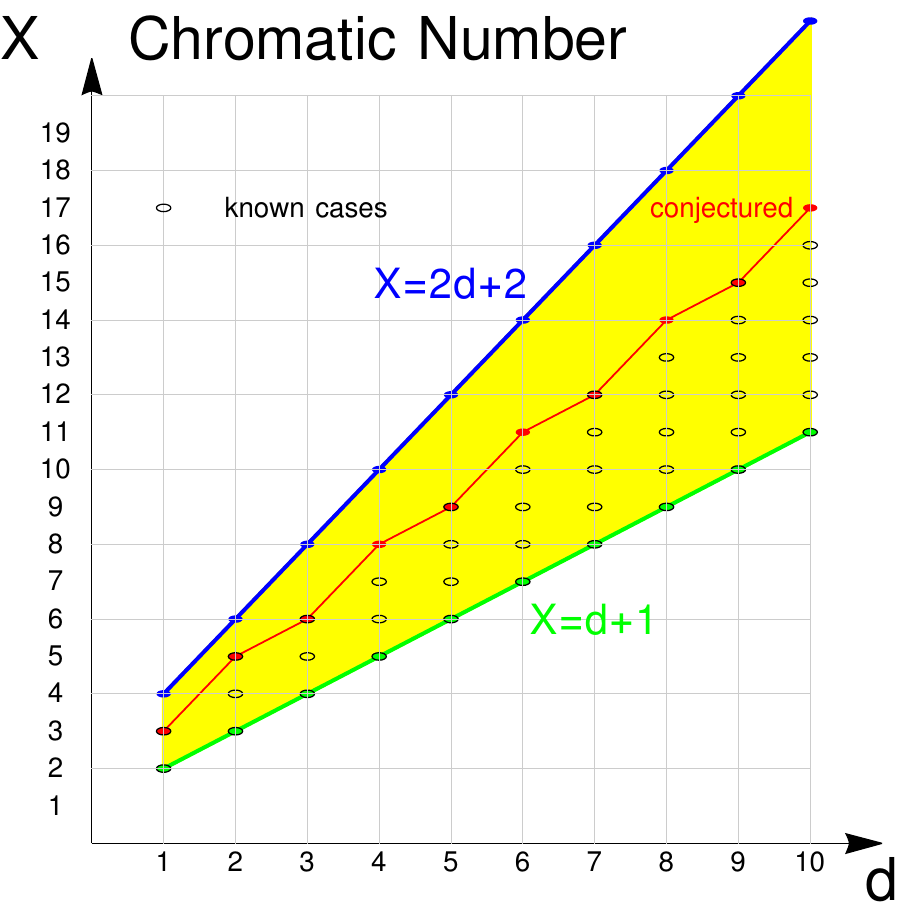}}
\caption{
This figure gives an overview of the chromatic number problem for $d$-manifolds.
This coloring problem is a graph theoretical question. Open problems are
to exclude $X=6$ for $2$-manifolds, to see whether $X=7$ is possible for 3-manifolds
and to find a $4$-manifold with $X=8$.
}
\end{figure}

\paragraph{}
By looking at unit spheres, we can see that for any topological type of $(d=2k)$-manifolds, there are 
examples with $X=3k+1$ and for any topological type of $(2k-1)$-manifolds there are examples with $X=3k$. 
We can find $(d=2k-1)$-spheres with $X=3k$ and $(d=2k)$-spheres with $X=3k+1$. 
Albertson and Stromquist \cite{AlbertsonStromquist}
conjectured $X\leq 5$ for $2$-manifolds (using different notions like locally planar but this should be 
equivalent). We know $X \leq 6$ for $2$-manifolds but do not know of an example yet of a $2$-manifold with $X=6$.
For $d=3$-manifolds, we only know $X \leq 8$ and have examples like $X(C_5 + C_5)=6$ but we have no example 
of a $3$-manifold with either $X=7$ or $X=8$. For $d=4$-manifolds we know $X \leq 10$, and $X(C_5 + H)=7$ 
if $H$ is a $2$-sphere with $X(H)=4$. We are not aware of any $4$-manifold $G$ yet with $X(G)=8$. 
If the formula $X(G) \leq \lceil 3(d+1)/2 \rceil$ were true and sharp, then there should exist a $4$-manifold 
with $X=8$. 

\paragraph{}
Because $2$-spheres are by Whitney characterized as maximally planar $4$-connected graphs
with at least $5$ vertices, the $4$-color theorem is equivalent to $X \leq 4$ for $2$-spheres. Coloring
a $2$-sphere should then be possible constructively:
realize the $2$-sphere $S$ under consideration as the boundary of a $3$-ball $B$ which after a sequence of 
interior edge-refinement $B=B_0,B_1,B_2,\dots,B_n$ which do not modify $S$ to have 3-Eulerian in the interior and so $4$-colorable. 
{\bf 3-Eulerian} in the interior means for 3-manifolds with boundary that every circle $S(a) \cap S(b)$ has even degree for 
every edge $(a,b)$ not contained in the boundary. 
3-Eulerian does not necessarily mean Eulerian in the sense of having all vertex degrees even.
The justification for the name is through coloring: a $2$-sphere has even vertex
degrees for all vertices if and only if it can be colored with the minimum of $3$ colors. 
A $3$-sphere has even edge degree for all edges if and
only if it can be colored with the minimum of $4$ colors. If $S$ is a $2$-sphere and $B=S+P_2$ is the suspension, a 
$3$-sphere, then the $2$-sphere $S$ is Eulerian if and only if the 3-sphere $B$ is 3-Eulerian. 
We could define a $d$-manifold to be {\bf $d$-Eulerian} in general as the property of having an empty Fisk variety $O(M)$. 

\paragraph{}
The {\bf Euler-Hierholzer theorem} assures that for an Eulerian $2$-sphere,
there is an {\bf Eulerian path}, a closed curve visiting every edge exactly once. On an Eulerian sphere, one has also a natural 
{\bf geodesic flow}, as we can continue a path naturally through every vertex. Sometimes, one even can get an {\bf ergodic} or transitive
geodesic flow in the sense that the geodesic is an Eulerian path \cite{knillgraphcoloring3}. In the case when $G$ is a non-Eulerian
$2$-manifold, one can always use a {\bf geodesic cutting algorithm} \cite{knillgraphcoloring3} 
to render it Eulerian. The reason why we can pair up points in $O(M)$ 
is that by the Euler handshake formula the number of odd degree vertices is always even in a $2$-manifold. 

\paragraph{}
This project is a continuation of \cite{KnillNitishinskaya,knillgraphcoloring,knillgraphcoloring2,knillgraphcoloring3}. 
The original spark \cite{KnillNitishinskaya} dealt mostly with the question when the minimal chromatic number $X=3$
is possible for $2$-manifolds. Literature search in December 2014 lead us to the pioneering work of Fisk \cite{Fisk1977b}
and his construction of tori with chromatic number $5$ and the definition of the odd part $O(M)$. 
We were originally interested in graphs with empty Fisk set $O(M)$ because of coloring
and especially because of the relation with the $4$-color theorem. The $4$-color theorem follows from the property of
being able to edge refine a $3$-ball so that $O(M)$ is confined to the boundary. 
So, in general, the topology and combinatorics of $O(M)$ within $M$ is interesting.

\paragraph{}
The coloring by manifold embedding prompted us to believe at first that a $d$-sphere can always be colored by $d+1$
or $d+2$ colors. As we have seen, this is can not be the case because there are 3-spheres with chromatic number $6$. 
Still, the relation with classical embedding problems produce interesting analogies: we know by the 
{\bf Whitney embedding theorem}
that every compact $d$-manifold can be embedded in a $2d$-Euclidean space suggesting that we should be able to color 
with $2d+1$ colors. There is still a possibility that this is actually the upper bound. For $d=2$ it gives $5$ for 
$d=3$ it gives $7$. Before trying to shoot for $X(G) \leq \lceil 3(d+1)/2 \rceil$ one could try to reach first
the easier $X(G) \leq 2d+1$, provided this is possible. 
If the embedding should decide about the chromatic number, how come that $d$-spheres
which can be embedded in a $(d+1)$-dimensional ball can not be colored by $d+2$ colors in general? 
Maybe it is not the Whitney embedding but the {\bf Nash-Kuiper embedding} which matters. We can by Nash-Kuiper 
$C^1$-embed any compact Riemannian $d$-manifold isometrically into a $(2d+1)$-dimensional manifold,
suggesting an upper chromatic bound $2d+2$ because we expect being able to locally 
refine the simply-connected ambient space
become minimally colorable with $2d+2$ colors. Now, if we
can refine an ambient $(2d+1)$-dimensional space in which $G$ is embedded without touching $G$, then $2d+2$ 
colors are sufficient. This is only an analogy of course as we do not use classical manifolds here.
Still in the light of embedding theorems, the upper bound $2d+2$ becomes a natural one. 

\section{Upper bound}

\paragraph{}
We now look for an upper bound on the chromatic number of {\bf $d$-manifolds}, finite simple graphs which have $(d-1)$-spheres 
as unit spheres $S(x)$. In general, the minimal coloring problem for simply connected
manifolds could be located in a polynomial complexity class, unlike the full problem on all graphs. 
The reason for being restricting to simply connected is that the manifold coloring problem in general depends on global 
properties and this could mean in general that the chromatic number might need full knowledge about the fundamental group.
Also the in general hard problem of constructing {\bf Hamiltonian paths} is easier for $d$-manifolds. 
We know that $d$-manifolds are Hamiltonian \cite{HamiltonianManifolds} a result which generalizes the statement of Whitney
in the case $d=2$. The recursive combinatorial definitions allow to use induction referring locally to unit spheres $S(x)$
which are manifolds themselves of one dimension less. 
The general problem of coloring graphs with a minimal number of colors is known to be an NP-complete problem.
Still, it is possible that on classes of $d$-manifolds the problem is easier. Especially on $2$-spheres, the coloring problem should be solvable effectively in polynomial time. If that is the case, we believe that the topological frame-work
could be a key in proving it. We especially expect that coloring 2-spheres and so solving the 4-color problem 
can be done in polynomial time. 

\paragraph{}
It is necessary to stress here that there is a similar sounding but different 
topological graph coloring problem for graphs on $2$-manifolds $G$ of 
genus $g(G)$ \cite{Ringel1974}. In this part of {\bf topological graph theory} 
\cite{TuckerGross}, the upper bound $\chi(G) \leq H(g(G))$ was proven by Heawood \cite{Heawood49},
who also conjectured it to be sharp.
Ringel and Youngs proved in \cite{RingelYoungs} the {\bf map color theorem}, stating 
that the {\bf Heawood number} $H(g)=[(7+\sqrt{48g+1})/2]$ is 
a sharp upper bound on the chromatic number of a graph $G$ embeddable in a surface of genus $g$ and $G$ is different 
from the Klein bottle (covered by Philip Franklin \cite{Fanklin1934}). 
The algebraic number in the floor bracket is the solution to the equation $(n-3)(n-4)/12=g$
and Ringel and Youngs needed to show that on a surface of genus $g=\lceil (n-3)(n-4)/12 \rceil$, there is a complete subgraph $K_n$
embedded. For $n=7$, this gives the torus case $g=1$ of Heawood.
The Heawood conjecture theme is a different problem than the manifold coloring problem we consider here. 
In the torus $g=1$ case for example, Heawood already saw
that one can embed the complete graph $K_7$ into a $g=1$ surface $T^2=R^2/Z^2$ but $K_7$ is a $6$-dimensional simplex. 
The $2$-manifolds considered here are finite simple graphs which do not even allowed to contain the complete graph $K_4$.  
The $d$-manifold coloring problem is defined in all dimensions. It invokes only a finite, graph theoretical frame-work 
without any need to use Euclidean space. There appears also to be no (at least no obvious) connection to the coloring problem of 
Sarkaria \cite{Sarkaria1981} which is a generalization of the Heawood topological graph theory set-up to higher dimensions. 

\paragraph{}
A {\bf conjecture of Albertson-Stromquist} \cite{AlbertsonStromquist} can be rephrased in the current context 
that $2$-manifolds can be colored by $5$ colors or less. We will just prove the upper bound $X \leq 6$ for $2$-manifolds. 
In the case $d=2$, one could also see that $X \leq 6$
by producing closed Kempe chains cutting the manifold into contractible parts, where each part is a planar graph 
having chromatic number $\leq 4$. When cutting, we have to make sure to have even length of all the Kempe dividing chains. 
This can always be done by making detours (which is possible as long as not all
vertex degrees are 4). The general inequality however can be proven with less effort and works in any dimension: 

\begin{thm}
Every d-manifold can be colored by by $X$ colors with $d+1 \leq X \leq 2d+2$.
\label{Theorem1}
\end{thm}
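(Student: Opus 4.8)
The plan is to treat the two inequalities separately; the lower bound is elementary and the upper bound carries the real content.

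For the lower bound $X(G)\ge d+1$ I would show by induction on $d$ that every $d$-manifold contains a complete graph $K_{d+1}$, that is, a $d$-simplex. Choose any vertex $x$; its unit sphere $S(x)$ is a $(d-1)$-sphere and therefore, by the inductive hypothesis applied to this $(d-1)$-manifold, contains a $K_d$. Since $x$ is adjacent to every vertex of $S(x)$, adjoining $x$ to that $K_d$ produces a $K_{d+1}$ inside $G$. The base cases anchor the induction: a $0$-manifold contains $K_1$ and a $1$-manifold (a cyclic graph) contains $K_2$. Because any proper coloring must place $d+1$ distinct colors on a $K_{d+1}$, this gives $X(G)\ge d+1$.

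For the upper bound $X(G)\le 2d+2=2(d+1)$ I would follow the dual-graph strategy announced in the overview. Form the dual graph $\hat G$ whose vertices are the facets (the maximal simplices $K_{d+1}$) of $G$ and whose edges join two facets that share a codimension-one face $K_d$. The defining manifold property, that each $(d-1)$-face lies in exactly two facets, shows at once that $\hat G$ is $(d+1)$-regular. I would then invoke the decomposition stated in the overview: the facet set splits into two vertex-disjoint parts $A$ and $B$, each inducing a forest in $\hat G$ and each generating an induced subcomplex of $G$.

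The coloring is then produced palette by palette. A family of facets that forms a forest under codimension-one adjacency is $(d+1)$-colorable: root each tree at a facet, color its $d+1$ vertices with the $d+1$ colors, and propagate outward, using that every newly attached facet meets the already-colored part in a single $K_d$ (carrying $d$ colors on $d$ vertices), so its one remaining vertex receives the unique missing color; acyclicity guarantees that no clash is ever forced. Coloring the subcomplex generated by $A$ with the palette $\{1,\dots,d+1\}$ and the one generated by $B$ with the disjoint palette $\{d+2,\dots,2d+2\}$, any edge of $G$ internal to one part is properly colored by construction, while any edge running between the two parts is automatically proper because the palettes are disjoint. This yields $X(G)\le 2(d+1)$.

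The main obstacle is the decomposition of $\hat G$ into two induced forests, together with the bookkeeping that assigns each vertex of $G$ consistently to $A$ or to $B$ (this is the role of the ``closed'' or ``generates'' condition). Such a partition cannot follow from regularity alone, since high-degree regular graphs can have large vertex arboricity, so the argument must genuinely exploit that $\hat G$ comes from a flag $d$-manifold. For the surface case $d=2$ the cubic dual is tame enough that even its edges decompose into two forests by Nash--Williams; but already for $d\ge 3$ the edge count $(d+1)n/2>2(n-1)$ rules that out, and only the vertex-partition into two induced forests can survive. I expect this to be proved by induction on the dimension, reducing to the duals of the unit spheres $S(x)$, which are lower-dimensional manifolds, and using the connectivity of $\hat G$ to glue the local forests together without ever closing a cycle.
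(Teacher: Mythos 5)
Your lower bound is correct and in fact more complete than the paper, which takes the existence of a $K_{d+1}$ for granted. The upper bound, however, has a genuine gap: the partition of the facet set of $\hat G$ into two parts each inducing a forest is the entire content of the theorem, and you assert it rather than prove it, merely speculating that it follows by induction on dimension through the unit spheres. That is not how the paper argues. The paper's route is purely about the $1$-dimensional structure of $\hat G$: choose $b=|E(\hat G)|-|V(\hat G)|+1$ edges whose removal leaves a spanning tree $T$ and which themselves contain no cycle, properly $2$-color $T$, and observe that every monochromatic edge of $\hat G$ is then a removed edge, so a monochromatic cycle would have to lie entirely inside the acyclic removed set. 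Note that such a choice of removed edges is exactly an edge-decomposition of $\hat G$ into two forests, which is precisely what your own Nash--Williams count $(d+1)n/2>2(n-1)$ rules out for $d\geq 3$; so you have correctly located the pressure point of the argument, but you have not supplied a replacement, and without one the bound $X\leq 2d+2$ is not established for $d\geq 3$.

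A second, smaller gap sits in the propagation step. Two facets lying in the same tree of a forest can share a face of codimension two or more without being adjacent in $\hat G$; acyclicity of the tree under codimension-one adjacency does not prevent the greedy propagation from assigning that shared vertex two different colors, which is why the paper insists that each tree ``generates itself'' in $\hat G$ (and even that condition only controls codimension-one incidences). Moreover the subcomplexes generated by $A$ and by $B$ are not vertex-disjoint: every codimension-one face separating an $A$-facet from a $B$-facet has all $d$ of its vertices in both subcomplexes, so ``color $A$ with one palette and $B$ with the other'' does not by itself define a single color per vertex of $G$. You flag this bookkeeping as an obstacle, which is the right instinct, but flagging it is not resolving it; as written the argument does not yet produce a well-defined proper coloring.
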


\begin{proof}
The {\bf dual graph} $\hat{G}=(\hat{V},\hat{E})$ has as vertex set
$\hat{V}$ the maximal simplices of $G$. Two maximal simplices are connected if they intersect in a 
$(d-1)$-dimensional simplex. 
This graph is {\bf triangle-free} as otherwise, a triangle would appear as the intersection of 
$(d-2)$ unit spheres $S(x_j)$, where $x=(x_0,\dots,x_{d-2})$ is a $(d-2)$-simplex in $G$. 
 While we will not need this, one can also note that the graph is 
{\bf $(d+1)$- regular} because every simplex is connected to exactly 
$d+1$ neighbors. We now can partition $\hat{V}$ into 
two forests, where each tree generates itself in $\hat{G}$ (if two vertices 
in the tree are connected in $\hat{G}$ then they must be connected in the tree).
The trees of the first forest use a coloring of the 
corresponding simplices with a first batch of $(d+1)$ colors, 
the second forest tells which maximal simplices in $G$ are colored with
the second batch of $(d+1)$ colors.
Since $\hat{G}$ is triangle free, the zero'th $H^0(G')$ and first 
cohomology $H^1(\hat{G})$ are the only cohomology groups of interest. 
Assume $\hat{G}$ has the Betti vector $(1,b)$ so that 
$\chi(G)=|V|-|E|=1-b$ is the Euler characteristic. 
Now cut $\hat{G}$ at $b$ places to break the $b$ homology 
cycles but making sure that the $b$ edges (which are to be cut) 
do not form a closed loop. This produces a tree $T \subset \hat{G}$. 
Color this tree with $2$ colors. Now put back 
the edges to get $\hat{G}$ and $\hat{G}$ is partitioned into two sets, 
where none of them has a closed loop of the same color. 
If there would be such a loop, it would have to 
intersect with no edge in $T$ and so consist entirely of edges which were cut. 
To justify that one can cut $\hat{G}$ in $b$ places without having a 
closed loop is proven by contradiction: assume we had a minimal 
example of a graph for which we need to make cuts along a closed loop 
to prune it to a tree. Then by minimality, each of the
attached loops also would have to consist entirely of cuts. The same 
argument now can be applied again to see that all loops 
would have to be cut at every edge. By minimiality the union of loops is
the graph. But that would mean that the
number of edges is equal to $b$.  This is impossible because by 
the {\bf Euler-Poincar\'e formula}, the number of edges $|E|=|V|+b-1$ 
is larger than $b$.  
\end{proof}

\paragraph{}
The cohomology is only involved to justify that we can cut a
triangle free graph $(V,E)$ with less than $|E|$ edges to 
render it a tree. One could also see this in an elementary fashion.
Cutting all edges except $1$ always produces a graph without loops.

\paragraph{}
This result prompts to look at a general {\bf homotopy coloring problem} for finite simple graphs: 
how many colors are needed, if each connected coloring patch with the same color needs to 
be a contractible connected component? An upper
bound is obviously the {\bf Lusternik-Schnirelman capacity}, the minimal number of contractible
graphs which cover the graph (see \cite{josellisknill} for a graph version in a frame-work like here). 
This is enough as we can just color each of these patches 
differently. This means that spheres and balls have the {\bf homotopy chromatic number} $2$. A $d$-dimensional torus
has Lusternik-Schnirelman capacity $d+1$ and so homotopy chromatic number bound above by $d+1$. 
Theorem~\ref{Theorem1} tells that for every graph without triangles,
this {\bf homotopy chromatic number} is either $1$ or $2$ and that the homotopy chromatic number is 
$1$ in the triangle-free case if and only if we deal with a forest. For triangle-free graphs, the
chromatic number can be arbitrarily large \cite{Mycielski} and that in the planar case, the chromatic number 
is $\leq 3$ by {\bf Groetsch's theorem}. 

\section{Arithmetic}

\paragraph{}
Here is a simple observation about the {\bf Zykov join} \cite{Zykov}
$A+B$ for general finite simple graphs $A,B$. (For simplicity, we write here $+$ rather 
than $\oplus$ as done in other places.) Remember that $A+B$ is
the disjoint union $A \cup B$ for which additionally every vertex in $A$ connected to 
every vertex in $B$. The join $A+B$ of two spheres is a sphere again
because $S_{A + B}(a) = S(a) + B$ if $a \in V(A)$ and 
$S_{A + B}(b) = A + S(b)$ if $b \in V(B)$. Inductively with 
respect to dimension, we have that both $A + S(b)$ and $S(a) + B$ are spheres, establishing
so that the join $A+B$ is a sphere. 

\begin{lemma}
The chromatic number is additive $X(A + B) = X(A) + X(B)$ for the join operation in graphs. 
\end{lemma}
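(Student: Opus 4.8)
The plan is to prove the two inequalities $X(A+B) \leq X(A)+X(B)$ and $X(A+B) \geq X(A)+X(B)$ separately, then combine them to obtain equality.

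For the upper bound, I would take an optimal coloring of $A$ using $X(A)$ colors and an optimal coloring of $B$ using $X(B)$ colors, drawn from two disjoint palettes. Placing these two colorings side by side produces a coloring of $A+B$: edges internal to $A$ or internal to $B$ are properly colored by assumption, and every cross edge joins a vertex carrying an $A$-palette color to a vertex carrying a $B$-palette color, and these differ because the palettes are disjoint. This exhibits a valid coloring with $X(A)+X(B)$ colors, so $X(A+B) \leq X(A)+X(B)$.

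For the lower bound, I would start from an arbitrary proper coloring of $A+B$ and let $S_A$ and $S_B$ denote the sets of colors actually appearing on $V(A)$ and $V(B)$ respectively. The crux of the argument is the observation that $S_A \cap S_B = \emptyset$: if some color appeared on both a vertex $a \in V(A)$ and a vertex $b \in V(B)$, then the edge $(a,b)$ — present because the join connects every vertex of $A$ to every vertex of $B$ — would be monochromatic, contradicting properness. Since the restriction to $A$ is itself a proper coloring of $A$, we get $|S_A| \geq X(A)$, and similarly $|S_B| \geq X(B)$. Disjointness then forces the total number of colors used to be at least $|S_A| + |S_B| \geq X(A)+X(B)$, hence $X(A+B) \geq X(A)+X(B)$.

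Combining the two bounds yields the claimed identity. The only substantive step is the disjointness of the two color sets in the lower bound; everything else is routine palette bookkeeping. This disjointness is precisely the feature that makes the identity useful for constructing manifolds of large chromatic number, since iterating the join of $5$-cycles $C_5$ (each with $X(C_5)=3$) produces spheres whose chromatic numbers simply add, as in the example $X(C_5+C_5)=6$ mentioned above.
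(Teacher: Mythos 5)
Your proof is correct and takes essentially the same route as the paper's: the lower bound comes from observing that the join edges force the color sets used on $A$ and on $B$ to be disjoint, and the upper bound from an explicit coloring with two disjoint palettes. You simply spell out the details that the paper leaves implicit.
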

\begin{proof}
The join $A + B$ is the disjoint union with the additional enhancement that 
all vertices in $A$ connected to all vertices in $B$.
The color set on the $A$ side therefore has to be disjoint with the color set on the $B$ side so 
that the chromatic number must be larger or equal. 
An explicit coloring shows that the chromatic number is the sum.
\end{proof}

\paragraph{}
This lemma immediately gives examples of spheres which have relatively large chromatic number:

\begin{coro} 
There are $(2k-1)$-spheres that can have chromatic number $3k$. 
There are $(2k)$-spheres with chromatic number $4+3(k-1)=3k+1$. 
\end{coro}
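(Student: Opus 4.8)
The plan is to construct the required spheres explicitly as iterated Zykov joins, using both the additivity lemma $X(A+B)=X(A)+X(B)$ and the fact recorded just before the lemma that the join of an $m$-sphere and an $n$-sphere is an $(m+n+1)$-sphere. The single building block I would use is the pentagon $C_5$. It is a $1$-sphere, since each of its unit spheres consists of two disjoint points, a $0$-sphere, and removing any vertex leaves the path $P_4$, which is contractible. Being an odd cycle, it has chromatic number $X(C_5)=3$, one more than the $2$ needed for an even cycle. This excess of one color per pentagon factor is exactly what drives the construction.

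For the odd-dimensional case I would take the $k$-fold join
$$ G = \underbrace{C_5 + C_5 + \cdots + C_5}_{k \text{ copies}}. $$
Iterating the join-of-spheres rule, the dimensions add up as $\underbrace{1+\cdots+1}_{k} + (k-1) = 2k-1$, so $G$ is a $(2k-1)$-sphere. Applying the lemma $k-1$ times gives $X(G)=k\cdot X(C_5)=3k$, which settles the first claim. For the even-dimensional case I would replace one pentagon factor by a $2$-sphere $H$ with $X(H)=4$ and take
$$ G = H + \underbrace{C_5 + \cdots + C_5}_{k-1 \text{ copies}}. $$
Such an $H$ exists: by the criterion stated in the overview, a $2$-sphere is $3$-colorable precisely when all vertex degrees are even, so any $2$-sphere with an odd-degree vertex fails to be $3$-colorable, while the $4$-color theorem caps its chromatic number at $4$; the icosahedron, with all degrees equal to $5$, is a concrete example giving $X(H)=4$. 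The dimension now totals $2+(k-1)+(k-1)=2k$, making $G$ a $(2k)$-sphere, and additivity yields $X(G)=X(H)+(k-1)X(C_5)=4+3(k-1)=3k+1$, as required.

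The only genuinely non-formal inputs are the existence of a $2$-sphere with chromatic number $4$ and the verification that $C_5$ is a $1$-sphere in the inductive sense; both are immediate. Everything else is bookkeeping with the dimension-of-join formula and the additivity lemma, so I expect no real obstacle. The one point worth stating carefully is the dimension count, namely the even analogue $2+(k-1)+(k-1)=2k$ and the odd count $1+\cdots+1+(k-1)=2k-1$, each of which follows directly from the repeatedly applied identity that the join of an $m$-sphere and an $n$-sphere is an $(m+n+1)$-sphere.
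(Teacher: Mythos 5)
Your construction for the $(2k-1)$-spheres is exactly the paper's: the $k$-fold join $C_5+\cdots+C_5$, with the dimension count and the additivity lemma applied as you do. For the $(2k)$-spheres the paper takes a slightly different (and lighter) step: instead of replacing one pentagon by a $4$-chromatic $2$-sphere, it simply joins one additional $0$-sphere $P_2$ onto the $(2k-1)$-dimensional example, which raises the dimension by $1$ and the chromatic number by $X(P_2)=1$, giving $3k+1$ with no external input. Your variant is equally correct, but it needs the extra fact that the icosahedron has chromatic number exactly $4$; the lower bound $X\geq 4$ is immediate (a degree-$5$ vertex together with its unit sphere $C_5$ forces a fourth color), while for the upper bound you should either exhibit an explicit $4$-coloring of the icosahedron or, as you do, invoke the $4$-color theorem --- a much heavier tool than the statement requires. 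So: both arguments are valid and rest on the same additivity lemma and the join-of-spheres dimension formula; the paper's even-dimensional step is marginally more self-contained, while yours makes the arithmetic $4+3(k-1)$ in the statement of the corollary transparent.
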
 
\begin{proof}
An example is $C_5 + C_5 + \cdots + C_5$. The sum of $k$ such circles 
is a $(2k-1)$-sphere. For the second statement, throw in an additional 
$0$-sphere which increases the chromatic number by $1$. 
\end{proof}

\paragraph{}
A graph of maximal dimension $d$ is called {\bf minimally chromatic} if $X(G)=d+1$. 
In a simply connected $d$-manifold, this will be equivalent to the fact that the 
Fisk variety $O(M)$ is empty. Minimally chromatic graphs produce a sub-monoid of all graphs: 

\begin{coro}
If $A$ and $B$ are minimally chromatic graphs, then $A+B$ is minimally chromatic. 
\end{coro}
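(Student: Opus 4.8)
The plan is to reduce the claim to the additivity lemma $X(A+B)=X(A)+X(B)$ together with a single structural fact: how the maximal dimension behaves under the join. Write $d_A$ and $d_B$ for the maximal dimensions of $A$ and $B$, so that the largest complete subgraphs are $K_{d_A+1}$ and $K_{d_B+1}$ respectively. The minimal-chromaticity hypotheses are then exactly $X(A)=d_A+1$ and $X(B)=d_B+1$, and the goal is to show $X(A+B)=d_{A+B}+1$, where $d_{A+B}$ is the maximal dimension of $A+B$.

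First I would compute $d_{A+B}$. Because the Zykov join $A+B$ adds every edge between a vertex of $A$ and a vertex of $B$, any complete subgraph $K$ of $A+B$ splits as $K=(K\cap A)\cup(K\cap B)$, where $K\cap A$ is a clique of $A$ and $K\cap B$ is a clique of $B$; conversely, the union of any clique in $A$ with any clique in $B$ is again a clique in $A+B$. Taking both pieces maximal produces a largest clique of size $(d_A+1)+(d_B+1)$, so $d_{A+B}=d_A+d_B+1$. This clique decomposition is the only non-formal step, and it is the place where one must be careful — but it is immediate from the definition of the join, since adjacency in $A+B$ is inherited from $A$ on $V(A)$, inherited from $B$ on $V(B)$, and total across the two vertex sets.

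The remaining steps are pure arithmetic. Applying the lemma and then substituting the two hypotheses gives
\[
X(A+B)=X(A)+X(B)=(d_A+1)+(d_B+1)=(d_A+d_B+1)+1=d_{A+B}+1 .
\]
Hence $A+B$ is minimally chromatic, which is the assertion of the corollary.

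I expect no genuine obstacle here: the content is carried entirely by the join-additivity of $X$ and the additive-plus-one behaviour of the maximal dimension. The single point worth stating cleanly is precisely this matching offset — under the join the maximal dimension gains a $+1$ in exactly the same way the chromatic number gains its extra color, so the two increments cancel and minimality is preserved. The one caveat to flag is that the argument presumes ``maximal dimension'' is read as the clique dimension of the Whitney complex; for the stated class of minimally chromatic graphs this is the relevant notion, and the clique-decomposition step above is what makes the dimension bookkeeping rigorous.
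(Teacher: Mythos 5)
Your argument is correct and is essentially the paper's proof: the paper simply observes that both the clique number $f=\mathrm{dim}+1$ and the chromatic number are additive under the join, which is exactly your computation $d_{A+B}=d_A+d_B+1$ combined with $X(A+B)=X(A)+X(B)$. You merely spell out the clique-decomposition step that the paper leaves implicit.
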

\begin{proof}
Both the {\bf clique number} $f={\rm dim}+1$ as well as the chromatic number $f=X$ are
additive $f(A+B)=f(A)+f(B)$.
\end{proof}

\paragraph{}
If $S_0=\{ \{a,b\},  \emptyset \}$ is the $0$-sphere, then $G + S_0$ is called the 
{\bf suspension} of $G$. This is completely analogue to what we are used to in topology
and if were to be allowed to use the geometric realization functor from graphs to topological 
spaces (of course taking the Whitney complex on the graph as usual so that discrete d-manifolds
go over to smooth compact d-manifolds), the suspension commutes with it. 

\begin{coro}
Minimally chromatic graphs are invariant under suspension.
\label{Theorem1}
\end{coro}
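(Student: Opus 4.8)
The plan is to reduce this to the preceding corollary, which states that the join of two minimally chromatic graphs is again minimally chromatic. Since the suspension of $G$ is by definition the join $G + S_0$ with the $0$-sphere $S_0 = \{\{a,b\}, \emptyset\}$, it suffices to verify that $S_0$ is itself minimally chromatic, and then invoke that corollary with $A = G$ and $B = S_0$.

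First I would pin down the dimension and chromatic number of $S_0$. Its maximal simplices are the two isolated vertices $a$ and $b$, so its maximal dimension is $d = 0$ and its clique number is $f(S_0) = \dim + 1 = 1$. Because $S_0$ carries no edge, both vertices may legitimately receive the same color, so $X(S_0) = 1 = 0 + 1$. Hence $S_0$ is minimally chromatic.

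Now assume $G$ has maximal dimension $d$ with $X(G) = d + 1$. By additivity of the clique number $f = \dim + 1$ under the join, the suspension $G + S_0$ has maximal dimension $(d+1) + 1 - 1 = d + 1$; by the join lemma, $X(G + S_0) = X(G) + X(S_0) = (d+1) + 1$, which is exactly $\dim(G + S_0) + 1$. Therefore $G + S_0$ is minimally chromatic, as required. Equivalently, one simply applies the previous corollary to the pair $(G, S_0)$.

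I do not expect a genuine obstacle here, as the statement is essentially a special case of the join corollary. The only point that warrants a moment's care is confirming that $S_0$, an edgeless pair of points, has chromatic number $1$ rather than $2$, i.e.\ that a single color suffices when there are no adjacencies to violate; it is tempting to read the $0$-sphere as two distinct vertices needing two colors. Once that is settled, the additivity of both $\dim + 1$ and $X$ does all the work.
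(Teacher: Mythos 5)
Your proof is correct and follows essentially the same route as the paper: both reduce the claim to the preceding corollary on joins by noting that $S_0$ is $0$-dimensional with chromatic number $1$ (hence minimally chromatic), and then use the additivity of $\dim+1$ and $X$ under the join. Your explicit check that the edgeless pair of points needs only one color is exactly the point the paper's proof relies on when it asserts that suspension adds $1$ to both the dimension and the chromatic number.
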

\begin{proof}
All $0$-dimensional graphs are minimally chromatic and especially the $0$-sphere $S_0=P_2$,
the two point graph. The suspension operation $A \to A+S_0$ adds $1$ to both the dimension 
${\rm dim}$ as well as to the chromatic number $X$. 
\end{proof} 

\paragraph{}
The basic compatibility of chromatic number with addition also would help to produce
more examples for which we know the chromatic number. For example: 

\begin{coro}
If there would be a $2$-manifold with $X(G)=6$, then we also had
a $3$-manifolds with $X(G)=7$, a $4$-manifolds with $X(G)=9$ and 
a $5$-manifolds with $X(G)=12$ etc. 
\end{coro}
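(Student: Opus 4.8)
The plan is to read the statement off the additivity lemma $X(A+B)=X(A)+X(B)$, combined with the additivity of the clique number (equivalently $\dim(A+B)=\dim A+\dim B+1$) recorded above. Writing $G$ for the hypothetical $2$-manifold with $X(G)=6$, I would build the higher examples by joining $G$ with small, explicitly colorable spheres of known chromatic number, so that each join raises the pair (dimension, chromatic number) by a predictable amount.

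Concretely, I would use three gadgets. Joining with the $0$-sphere $S_0$ (the suspension), which has $\dim S_0=0$ and $X(S_0)=1$, gives $G+S_0$ with $\dim=3$ and $X(G+S_0)=6+1=7$. Joining with $C_5$, a $1$-sphere with $X(C_5)=3$, gives $G+C_5$ with $\dim=4$ and $X(G+C_5)=6+3=9$. Joining with a second copy of $G$ (or any $2$-dimensional graph of chromatic number $6$) gives $G+G$ with $\dim=5$ and $X(G+G)=6+6=12$. The ``etc.'' then follows by iterating these joins: additivity makes the chromatic bookkeeping automatic at every stage, so no color count is ever in doubt.

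The step I expect to be the genuine obstacle is not the chromatic arithmetic but the verification that each object is actually a manifold of the advertised dimension, rather than merely a complex with the right clique and chromatic numbers. The danger sits in the unit-sphere formula $S_{A+B}(b)=A+S_B(b)$: at the vertices contributed by the joined factor the link collapses onto $G$ itself --- the two suspension poles $p$ of $G+S_0$ have $S(p)=G+\emptyset=G$ --- so for $G+S_0$ to be a $3$-manifold one would need $G$ to be a $2$-sphere. That is impossible, since by the four-color theorem every $2$-sphere has $X\le 4<6$. Hence the plain suspension, and likewise every join that keeps $G$ as a factor, degenerates to a non-manifold exactly at these poles. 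The crux of an honest argument is therefore to bypass this degeneration: one must resolve the pole links by a local refinement or surgery that does not lower the global chromatic number, or replace the bare join by another dimension-raising construction that transports the chromatic obstruction of $G$ into a bona fide higher manifold. I expect essentially all of the difficulty to live in this manifold-realization step.
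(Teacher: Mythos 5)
Your chromatic bookkeeping reconstructs exactly what the paper intends: the corollary is stated with no proof at all, immediately after the additivity lemma $X(A+B)=X(A)+X(B)$, and the numbers $7=6+1$, $9=6+3$, $12=6+6$ can only come from the joins $G+S_0$, $G+C_5$ and $G+G$ that you write down. More importantly, the obstruction you flag is genuine, and it is a gap in the corollary itself rather than in your write-up. By the unit-sphere formula $S_{G+H}(b)=G+S_H(b)$, the link of any vertex $b$ of the joined factor $H$ contains $G$ as a join factor; iterating the formula inside $H$ eventually produces a link equal to $G+\emptyset=G$. So these links can only be spheres if $G$ is itself a $2$-sphere, and the four-color theorem (accepted by the paper) forces $X\leq 4$ for $2$-spheres, so a $2$-manifold with $X=6$ is never a sphere. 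Hence $G+S_0$, $G+C_5$ and $G+G$ all have the advertised clique and chromatic numbers but fail the manifold condition at every vertex of the joined factor; they are not $3$-, $4$- and $5$-manifolds in the paper's sense.

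Consequently neither your proposal nor the paper actually proves the statement as written: the paper supplies no argument, and the implicit join construction proves only the existence of $3$-, $4$- and $5$-dimensional complexes with chromatic numbers $7$, $9$, $12$. To close the gap one needs precisely the extra step you describe --- a dimension-raising construction that transports the chromatic obstruction of $G$ into a bona fide manifold, for instance by resolving the degenerate pole links through local refinement or by passing to the boundary of a thickening, in either case while certifying that the chromatic number does not drop. Your diagnosis that essentially all of the difficulty lives in this manifold-realization step is correct; as it stands the corollary should either be weakened to assert only higher-dimensional complexes with those chromatic numbers, or be supplied with the missing construction.
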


\paragraph{}
Minimally coloring $d$-manifolds is a generalization of the $4$-color problem because 
coloring $2$-spheres settles the $4$-color problem. In the case $d=2$,
a {\bf 2-manifold} is a finite simple graph for which every unit sphere is a $1$-sphere.
A $1$-sphere $C_n$ is a cyclic graph of length $n=4$ or more.
The task of coloring a manifold is ``simple" after a refinement or more generally after taking
a product: the chromatic number collapses to the minimal number $d+1$. 

\paragraph{}
The Zykov join can be augmented with a multiplication $*$ (called {\bf large multiplication} or 
{\bf Sabidussi multiplication}) to build an associative ring. But
the product of two spheres is never a sphere any more. Still, let us just mention here for
completeness that we still can find the chromatic number $X(A *B)$ of a large product of two 
general graphs $A,B$. 

\begin{lemma}
The chromatic number is multiplicative with respect to the large Sabidussi 
multiplication$X(AB) = X(A) X(B)$.  The chromatic number is therefore a ring homomorphism
from the Sabidussi ring to the integers. 
\end{lemma}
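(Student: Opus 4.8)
The plan is to prove the two inequalities $X(A*B)\le X(A)X(B)$ and $X(A*B)\ge X(A)X(B)$ separately, using throughout the defining adjacency of the large product: $(a,b)$ and $(a',b')$ are adjacent exactly when $a\sim a'$ in $A$ or $b\sim b'$ in $B$. The first structural fact I would record is that a set $I\subseteq V(A)\times V(B)$ is independent in $A*B$ if and only if its projections $\pi_A(I)$ and $\pi_B(I)$ are independent in $A$ and in $B$; equivalently, every independent set of $A*B$ sits inside a ``box'' $P\times Q$ with $P,Q$ independent, and every such box is itself independent. Since a proper coloring is a cover by independent sets, and each color class can be enlarged to its surrounding box without increasing the count, this reduces the computation of $X(A*B)$ to the minimal number of boxes $P\times Q$ needed to cover $V(A)\times V(B)$.

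For the upper bound I would take optimal colorings $c_A\colon V(A)\to\{1,\dots,X(A)\}$ and $c_B\colon V(B)\to\{1,\dots,X(B)\}$ and color $A*B$ by the pair $c(a,b)=(c_A(a),c_B(b))$. If $(a,b)\sim(a',b')$ then $a\sim a'$ or $b\sim b'$, so one of the two coordinates of $c$ already differs and $c$ is proper; it uses at most $X(A)X(B)$ colors. This half is the multiplicative analogue of the join lemma $X(A+B)=X(A)+X(B)$ and needs no further idea.

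The lower bound is where the content lies, and I would approach it through the palette structure the product forces. Fix any proper coloring of $A*B$ and, for each $a\in V(A)$, let $C_a$ be the set of colors appearing in the row $\{a\}\times V(B)$. Since $(a,b)\sim(a,b')$ precisely when $b\sim b'$, each row is a proper coloring of $B$, whence $|C_a|\ge X(B)$; and since $a\sim a'$ forces $(a,b)\sim(a',b')$ for all $b,b'$, adjacent rows receive disjoint palettes, $C_a\cap C_{a'}=\emptyset$. Thus $a\mapsto C_a$ is an assignment of pairwise-disjoint-on-edges color sets of size at least $X(B)$, and the aim is to conclude that the total number of colors used is at least $X(A)X(B)$.

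I expect this final deduction to be the main obstacle, and it is the step that must be argued with genuine care rather than waved through. The disjoint-palette condition by itself only expresses that the colors realize a set-coloring of $A$ with sets of size $X(B)$, which can be economical: a multiple coloring of $A$ may use fewer than $X(B)\cdot X(A)$ colors, so one cannot simply multiply. The clique lower bound $X(A*B)\ge\omega(A*B)$ is likewise insufficient, since $\omega(A*B)$ need not reach $X(A)X(B)$ when $A$ or $B$ fails to be perfect, and even the fractional relaxation, which behaves multiplicatively, delivers only $\chi_f(A)\chi_f(B)$. To close the gap I would try to exploit the reformulation $\overline{A*B}=\overline{A}\boxtimes\overline{B}$, recasting the claim as multiplicativity of the clique cover number of a strong product, and attempt a direct double count of the covering boxes across the rows and columns simultaneously. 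Making any of these arguments actually yield the full integer factor $X(A)X(B)$, rather than a fractional or clique-sized surrogate of it, is the crux on which the whole lemma turns.
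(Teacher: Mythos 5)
Your upper-bound half is exactly the paper's argument: color $(a,b)$ by the pair $(c_A(a),c_B(b))$. But your suspicion about the lower bound is not merely a gap in your own write-up --- it is fatal to the lemma itself. The paper's proof of the lower bound is the single sentence ``because every pair $(a,b)$ is connected to $(c,d)$ if one of the two $(a,c)$ or $(b,d)$ are connected \dots we can not use less colors,'' which is precisely the unjustified step you isolated: adjacency only forces the row palettes $C_a$ to be proper colorings of $B$ that are disjoint across edges of $A$, i.e.\ a set-coloring (multicoloring) of $A$ with sets of size at least $X(B)$, and such multicolorings can be strictly more economical than $X(A)X(B)$. Indeed the lemma is false. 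As you observe, the independent sets of $A*B$ are exactly the boxes $P\times Q$ with $P,Q$ independent, so $X(A*B)$ is the minimal number of such boxes covering $V(A)\times V(B)$. Take $A=B=C_5$ with vertex set $\ZZ_5$ and edges $\{i,i+1\}$, so the independent pairs are the five distance-two pairs. The eight boxes $\{0,2\}\times\{0,2\}$, $\{0,2\}\times\{1,4\}$, $\{2,4\}\times\{0,3\}$, $\{1,4\}\times\{2,4\}$, $\{1,4\}\times\{1,3\}$, $\{1,3\}\times\{0,2\}$, $\{0,3\}\times\{1,4\}$, $\{0,3\}\times\{0,3\}$ cover all $25$ vertices (check row by row), so $X(C_5*C_5)\leq 8 < 9 = X(C_5)^2$. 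Your strong-product reformulation even pins down the exact value: $X(C_5*C_5)=\overline{X}(\overline{C_5}\boxtimes\overline{C_5})$ and $\overline{C_5}\cong C_5$; every clique of $C_5\boxtimes C_5$ lies in a square $\{r,r+1\}\times\{c,c+1\}$ of the $5\times 5$ torus, and if $s_b$ counts the squares in the row band $\{b,b+1\}$, then covering the five cells of row $r$ forces $s_{r-1}+s_r\geq 3$; summing cyclically gives at least $8$ squares, so $X(C_5*C_5)=8$.

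The failure is structural, just as your fractional remark predicts: the fractional chromatic number is multiplicative for this product and $\chi_f$ governs the growth of $X$ under its powers (a classical result of McEliece and Posner gives $\lim_n X(G^{*n})^{1/n}=\chi_f(G)$), so equality $X(AB)=X(A)X(B)$ must fail for any graph with $\chi_f(G)<X(G)$; with $\chi_f(C_5)=5/2$ it already fails at the second power, as above. What survives is only submultiplicativity $X(AB)\leq X(A)X(B)$ (your first half), so $X$ is not a ring homomorphism from the Sabidussi ring to the integers. Note also that the paper's subsequent corollary, $X(A*B)\geq X(A)X(B)$ for the Shannon product, inherits the problem and moreover has the inequality backwards: since the Shannon product is a spanning subgraph of the Sabidussi product, the cited containment yields $X(A*B)\leq X(AB)\leq X(A)X(B)$, consistent with the Žerovnik result quoted in the same paper, which gives $X(C_5\boxtimes C_5)=5$. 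In short: your proposal correctly proves everything that is true in this lemma and correctly refuses to prove the part that is not.
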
 
\begin{proof}
The vertex set of $AB$ is the Cartesian product. Color a point $(a,b)$ with 
the product of the colors of a and b. This shows that we can color $AB$ with 
$X(A) X(B)$ colors. But because every pair $(a,b)$ is connected to $(c,d)$ if 
one of the two $(a,c)$ or $(b,d)$ are connected in $A$ or $B$, we can not 
use less colors. 
\end{proof}

\paragraph{}
One can see this also by {\bf duality} to which we come momentarily.
The graph complement operation is an {\bf symmetry} 
of the category of graphs. It switches the numerical quantities {\bf chromatic number} with
{\bf clique number} as well as switches {\bf independence number}
to {\bf clique covering number} and switches the {\bf disjoint union}
with the {\bf Zykov join} and switches the 
{\bf strong Shannon multiplication} with the {\bf large Sabidussi multiplication}.

\paragraph{}
There are two isomorphic rings, the {\bf Shannon ring} (with disjoint union as addition and 
strong multiplication) and the {\bf Sabidussi ring} (with join as addition and large multiplication). 
The {\bf independence number} $\alpha(G)$ is dual to the 
{\bf clique number} $c(G) = \overline{\alpha}(G)$. The 
{\bf chromatic number} $X$ is dual to the {\bf clique covering number} $\overline{X}$. 
For a multiplicative number in one ring, one can look at the growth rate in the dual picture. The 
clique number $c(G)$ and clique covering number $d(G)$ is multiplicative in the Shannon ring and the 
chromatic number $X(G)$ and independence number $\alpha(G)$ are multiplicative in the Sabidussi ring. 
The exponential growth rate of the independence number therefore is interesting in the Shannon
ring which was the original motivation of Shannon. We can also look at the exponential growth rate
of the chromatic number $\limsup_{n \to infty} X(G^n)^{1/n}$ in the Shannon ring. This is motivated
just by analogy because the independence number is dual to the clique number (which is multiplicative
in the Shannon ring) and the chromatic number which is dual to the clique covering number which 
is multiplicative in the Shannon ring). 

\begin{coro}
$X(A*B) \geq X(A) X(B)$ for the Shannon product. 
\end{coro}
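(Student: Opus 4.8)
The plan is to prove the substantive (lower-bound) direction $X(A * B) \ge X(A)X(B)$; together with the product coloring from the Lemma, which always gives $X(A*B) \le X(A)X(B)$, this determines the value. I read $*$ as the large Sabidussi product, so that its complement $\overline{A * B} = \overline{A} \boxtimes \overline{B}$ is the Shannon strong product, and the inequality is the chromatic analogue of the super-multiplicative behavior $\alpha(\overline{A} \boxtimes \overline{B}) \ge \alpha(\overline{A}) \alpha(\overline{B})$ that Shannon observed for the strong product; this is the source of the label. Recall that $(a,b)$ and $(c,d)$ are adjacent in $A * B$ precisely when $ac \in E(A)$ or $bd \in E(B)$. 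I would fix an arbitrary proper coloring $\kappa$ of $A * B$ with color set $C$ and aim to show $|C| \ge X(A)X(B)$.

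The first step extracts local palettes. For fixed $b \in V(B)$ the fiber $V(A) \times \{b\}$ induces a copy of $A$, since $(a,b) \sim (c,b)$ iff $ac \in E(A)$; hence its palette $C_b = \kappa(V(A) \times \{b\})$ has $|C_b| \ge X(A)$. Moreover, if $bd \in E(B)$ then every vertex over $b$ is joined to every vertex over $d$, because $bd \in E(B)$ forces $(a,b) \sim (c,d)$ for all $a,c$; consequently $C_b \cap C_d = \varnothing$. Thus $b \mapsto C_b$ is a proper set coloring of $B$ that assigns at least $X(A)$ colors to each vertex and disjoint sets along edges, and $|C| \ge |\bigcup_{b} C_b|$. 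If $B$ is complete this already yields $|C| \ge X(A)\,|V(B)| = X(A)X(B)$, and the same holds whenever $B$ is perfect, since an optimal coloring then splits $V(B)$ into $X(B)$ cliques across which the palettes are mutually disjoint.

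The main obstacle is upgrading this set coloring to the full product for arbitrary $B$. Disjointness of palettes along edges, with at least $X(A)$ colors per vertex, only forces the $X(A)$-fold chromatic number of $B$, which for non-perfect $B$---already for $B = C_5$---can fall strictly below $X(A)X(B)$, so the fiber bound by itself is too weak. To close the gap I would reinstate the rigidity the fiber argument throws away: each color class of $\kappa$ is independent in $A * B$, hence sits inside a box $I \times J$ with $I$ independent in $A$ and $J$ independent in $B$, so the positions in $V(A)$ carrying any fixed color form a single independent set of $A$ regardless of how many fibers reuse that color. Counting colors by the way their boxes project onto $A$---and showing that using fewer than $X(A)X(B)$ of them forces a proper coloring of $A$, or of $B$, with too few colors---is the route I would pursue. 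Turning this box bookkeeping into an exact inequality, rather than the weaker fractional bound it produces at once, is the step I expect to require the most care.
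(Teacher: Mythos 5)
You leave the decisive step open, and it cannot be closed: the inequality you set out to prove is false for the product you chose. You correctly note that every color class of the Sabidussi (disjunctive) product $A*B$ lies in a box $I\times J$ with $I$ independent in $A$ and $J$ independent in $B$, so $X(A*B)$ is the least number of such boxes covering $V(A)\times V(B)$. Now take $A=B=C_5$ with vertices $0,\dots,4$, where the independent pairs are the distance-two pairs: the eight boxes $\{0,2\}\times\{0,2\}$, $\{2,4\}\times\{1,4\}$, $\{1,4\}\times\{2,4\}$, $\{1,3\}\times\{0,3\}$, $\{0,3\}\times\{2,4\}$, $\{0,2\}\times\{1,3\}$, $\{1,3\}\times\{1,3\}$ and $\{4\}\times\{0,3\}$ cover all $25$ vertices, giving $X(C_5*C_5)\le 8<9=X(C_5)^2$. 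In fact $8$ is exact: a box meets each fiber $\{a\}\times V(B)$ in at most $\alpha(C_5)=2$ vertices, so each of the five fibers needs at least $3$ boxes, forcing at least $\lceil 15/2\rceil=8$ boxes in total; this is precisely the $3$-fold chromatic number $\lceil 5\cdot 3/2\rceil$ of $C_5$, i.e.\ the fiber-palette bound you dismissed as too weak is sharp here, and no bookkeeping of box projections can raise it to $X(A)X(B)$. What survives of your argument is $X(A*B)\ge X(A)\,\omega(B)$, obtained from a single maximum clique of $B$; this also repairs your perfect-graph step, which should invoke $\omega(B)=X(B)$ rather than a partition of $V(B)$ into $X(B)$ cliques (such a partition is a clique cover of $B$ and generally does not have size $X(B)$).

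Measured against the paper, your difficulty is diagnostic rather than accidental. The paper's proof is one line: the Shannon (strong) product is a spanning subgraph of the Sabidussi product $AB$, so monotonicity gives $X(A*B)\le X(AB)$, and the preceding Lemma asserts $X(AB)=X(A)X(B)$. That argument establishes $X(A*B)\le X(A)X(B)$ --- the opposite of the inequality displayed in the corollary --- and the displayed $\ge$ also fails under the literal strong-product reading, since the Zerovnik result quoted in this very paper \cite{Zerovnik} gives $X(C_5\boxtimes C_5)=2^2+1=5$ (the case $p=2$). Moreover the lower-bound half of the Lemma itself, $X(AB)\ge X(A)X(B)$ for the disjunctive product, which you rightly isolated as the crux and which the Lemma's proof asserts without justification, is refuted by the eight-box cover above. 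The correct conclusion from your sound structural analysis --- color classes are boxes, and $C_5$ is not perfect --- was a counterexample, not a more careful count.
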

\begin{proof}
The Shannon product graph $A*B$ is a sub-graph of the Sabidussi product graph $AB$ 
so that the chromatic number can only become larger or stay the same. 
\end{proof}

\paragraph{}
One can now ask about the {\bf exponential growth rate} of the chromatic number in the 
Shannon ring
$$  \Theta(G) = \limsup_{n \to \infty} X(G^n)^{1/n} \; .  $$
This is already tough for products of odd cycles. 
For the product of $p$ odd cycles of lengths at least $2^p+1$
one has $\chi(G) = 2^p+1$ \cite{Zerovnik}. 

\paragraph{}
For $G=K_m$ for example, $\Theta(G)$ is $(m+1)$. For $G=C_4$ we still have
$X(C_4*C_4)=4$ but for $G=C_5$ we expect $X(C_5*C_5)$ to be larger than $9$. 
We were unable to have our computer algebra system to evaluate the chromatic 
number of the Shannon product $C_5*C_5$ yet which  is a graph with $f$-vector
$(25, 100, 100, 25)$ and Euler characteristic $0$. It is a graph homotopic to 
a $2$-torus and has Betti vector $(1,2,1)$.
% ChromaticNumber[StrongProduct[CycleGraph[5],CycleGraph[5]]]

\section{Minimal chromatic number}

\paragraph{}
The problem to minimally color a $d$-manifold with $(d+1)$ colors is a problem originally studied
by Percy Heawood. For $2$-manifolds, the Fisk set $O(G)$ of vertices with odd cardinality produces 
{\bf local obstacles} for minimal $3$-coloring. In the simply connected case, there are also global 
constraints which matter. There are also interesting connections between {\bf local and global situations}
as the Fisk set $O(G)$ of odd degree vertices on a discrete $2$-torus can never be a $0$-sphere if the rest is flat:
by Gauss-Bonnet, it would have to consist of a pair of vertices with vertex degree $5$ or $7$ \cite{IKRSS}
which is not possible. The proof is that the Burger's vector is not zero. In the monograph \cite{Fisk1977a}, 
Steve Fisk studied in particular the set of minimal colorings of a simplicial complex theoretically. 

\paragraph{}
Given two finite simple graphs $A,B$, the {\bf Cartesian product} (Stanley-Reisner) 
$(A \times B)_1$ is the graph in which the 
vertices are the pairs $(x,y)$, where $x,y$ are complete sub-graphs of $A$ or $B$ and where 
$(x,y)$ and $(u,v)$ are connected if they are different and either $x \subset u, y \subset v$
or $u \subset x, v \subset y$. If $1$ is the one-point graph, then the product $A \times 1$ 
is the Barycentric refinement of $A$. 

\paragraph{}
This Cartesian product is the Stanley-Reisner ring construction if one looks at the product
algebraically. We were actually searching with the help of a computer for such a product in 2015 
\cite{KnillKuenneth}, then realizing the 
Stanley-Reisner ring construction and then later also saw that it is just the Barycentric refinement
graph of the Cartesian product of simplicial complexes (which is not a simplicial complex but which 
has a Barycentric refinement that is). The actual Stanley-Reisner product is defined
within the polynomial rings. 

\paragraph{}
Like on the level of polynomials, the Stanley-Reisner product given b $A_1,B_1 \to (A \times B)_1$ is
associative. But if we start with graphs $A,B$ and get to $(A \times B)_1$ then it is of course not. 
We have  $A \times (1 \times 1) = A \times 1= A_1$ and $(A \times 1) \times 1 = A_1 \times 1 = A_2$
are first and second Barycentric refinements $A_1,A_2$ are not the same for graphs of positive
dimension. On the level of connection graphs, the Cartesian product becomes
an associative product and leads to the Shannon ring \cite{Shannon1956} (where the disjoint union
is the addition) which is isomorphic to the Sabidussi ring \cite{Sabidussi} (where the join operation
is the addition). As we are interested here primarily in products which preserve manifolds, the
product $(A \times B)_1$ is adequate. The chromatology of this product is not interesting however
as in general a product always produces manifolds with minimal chromatic numbers: 

\begin{lemma}
The Barycentric refinement of any $d$-manifold always has minimal chromatic number $d+1$.
More generally, the product $(A \times B)_1$ of a $p$-manifold and a $q$-manifold is 
a $(p+q)$-manifold of minimal chromatic number $X(A \times B) = p+q+1$. 
\end{lemma}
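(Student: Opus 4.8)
The plan is to write down an explicit coloring that uses the dimension grading built into every Barycentric-type construction, and then to match it against the lower bound $X \geq d+1$ supplied by Theorem~\ref{Theorem1}. Since $A \times 1 = A_1$, the first assertion is the special case $B = 1$, $q = 0$ of the second, so I would prove the product statement and read off the refinement statement as a corollary.

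For the product $(A \times B)_1$ of a $p$-manifold $A$ and a $q$-manifold $B$, recall that a vertex is a pair $(x,y)$ with $x$ a simplex of $A$ and $y$ a simplex of $B$, and that $(x,y)$ and $(u,v)$ are adjacent exactly when they differ and satisfy $x \subseteq u,\, y \subseteq v$ or $u \subseteq x,\, v \subseteq y$. I would color $(x,y)$ by the integer $\dim(x) + \dim(y)$, which ranges over $\{0, 1, \dots, p+q\}$ and so uses $p+q+1$ colors. This coloring is proper: if $(x,y)$ is adjacent to $(u,v)$ with $x \subseteq u$ and $y \subseteq v$, then because the pairs differ at least one inclusion is proper, and a proper inclusion of simplices strictly raises the dimension; hence $\dim(x)+\dim(y) < \dim(u)+\dim(v)$ and the two colors differ. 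This already gives the upper bound $X((A \times B)_1) \leq p+q+1$, and specializing to $B = 1$ gives $X(G_1) \leq d+1$.

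The matching lower bound is the only place where the manifold structure is genuinely used, and it is where I expect the real work to sit. I would invoke that $(A \times B)_1$ is a $(p+q)$-manifold, so that by Theorem~\ref{Theorem1} its chromatic number is at least $p+q+1$; combining with the upper bound closes the argument. To justify the dimension count I would run an interleaving-of-flags argument: a maximal simplex of $(A \times B)_1$ is a maximal chain in the face poset, built by starting from a pair of vertices $(\{a\},\{b\})$ and, at each of $p+q$ steps, enlarging either the $A$-coordinate or the $B$-coordinate simplex by one vertex until a maximal $p$-simplex of $A$ is paired with a maximal $q$-simplex of $B$. Such a chain has $p+q+1$ elements, is a copy of $K_{p+q+1}$, and receives exactly the colors $0, 1, \dots, p+q$ under the grading above; this simultaneously confirms that the maximal dimension is $p+q$ and that all $p+q+1$ colors are forced on a single top simplex. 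The main obstacle is thus verifying that the product is indeed a manifold of the asserted dimension --- the coloring itself is immediate from the grading --- but this is the standard Stanley-Reisner/Barycentric product fact, and the flag interleaving makes the dimension bookkeeping transparent.
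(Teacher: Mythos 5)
Your proof is correct and takes essentially the same route as the paper, whose proof also colors each vertex (a simplex, or a pair of simplices) by its dimension and counts the $d+1$ (respectively $p+q+1$) resulting values. You additionally spell out the properness of the dimension coloring and the lower bound via the $K_{p+q+1}$ coming from a maximal interleaved flag, details which the paper's very terse proof leaves implicit.
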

\begin{proof}
The Barycentric refinement of a graph $A \times 1$ has as vertices the complete subgraphs and connects two
if one is contained in the other. The dimension is the coloring function. The dimension ranges 
from $0$ to the maximal dimension $d$. This means that there are $d+1$ different values. 
\end{proof} 

\paragraph{}
The general problem of coloring $d$-manifolds is hard for the simple reason that already 
the coloring of 2-spheres is equivalent to the $4$-color theorem. 
The fact that $2$-manifolds can have chromatic number $5$ was first demonstrated by Fisk.
It has been conjectured in a similar setting by Albertson-Stromquist that $5$ 
is an upper bound for $2$-manifolds \cite{AlbertsonStromquist}.
Albertson and Stromquist establish this in the case of tori as along as homotopically 
nontrivial cycles have length $8$ or more.

\paragraph{}
An upper bound for the chromatic number in the case of $(d=2)$-manifolds also could be deduced
from the 4-color theorem: the chromatic number of a $2$-manifold with or without boundary 
is $6$ or less because we can make cuts of even length along finitely many curves to 
have connected components which are all planar. 
By the 4-color theorem one can color each component with $4$ colors different from
the $2$ colors which are needed for the cuts. 
For $d=3$, one could cut the 3-manifold into
connected components and get $X \leq 9$ for 3-manifolds. Theorem~(\ref{Theorem1}) 
is already better however there. 

\section{The Fisk variety $O(M)$}

\paragraph{}
A {\bf $(d-2)$-simplex} $x=(x_0,\dots,x_{d-2})$ in a $d$-manifold $G$ defines the $1$-sphere 
$S(x_0) \cap \dots \cap S(x_{d-2})$. It is called the {\bf dual sphere} of $x$. 
It is labeled {\bf odd} if it has odd length. Since Fisk first considered this set, 
we call $O(M)$ the {\bf Fisk variety}. It is not a $(d-2)$-manifold in general. We actually wondered
under which conditions it is a manifold and have an answer below. 

\paragraph{}
The following Lemma essentially goes back to Heawood and is discussed also in \cite{Fisk1977b}.
It only applies for $d$-spheres, where we do not have to worry about monodromy 
issues when coloring.

\begin{lemma}[Heawood] 
For a $d$-sphere, $X(G)=d-1$ if and only if $O(G)=\emptyset$. 
The same holds for $d$-manifolds which are simply connected. 
\end{lemma}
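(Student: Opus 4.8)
The plan is to prove the two implications separately, establishing the forward direction ``$X(G)=d+1 \Rightarrow O(G)=\emptyset$'' for every $d$-manifold and using simple connectivity only for the converse. First a sanity remark: since every maximal simplex of a $d$-manifold is a $K_{d+1}$, the clique number equals $d+1$, so $X(G)\ge d+1$ always; I read the claim as asserting that the minimal value $X(G)=d+1$ is attained exactly when $O(G)=\emptyset$ (the ``$d-1$'' being a slip for $d+1$).

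For the forward direction, suppose $c$ is a proper coloring with the $d+1$ colors $\{1,\dots,d+1\}$. Fix a $(d-2)$-simplex $x=(x_0,\dots,x_{d-2})$ and let $C=S(x_0)\cap\cdots\cap S(x_{d-2})$ be its dual cycle, with vertices $v_1,\dots,v_\ell$ listed cyclically. The $d-1$ vertices of $x$ receive $d-1$ distinct colors; call the two leftover colors $a,b$. Every $v_i$ is adjacent to all of $x$, so $c(v_i)\in\{a,b\}$, and each consecutive pair $v_i,v_{i+1}$ together with $x$ spans a maximal $K_{d+1}$, forcing $c(v_i)\ne c(v_{i+1})$. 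Hence $c$ restricts to a proper $2$-coloring of $C$, which exists only if $\ell$ is even. So no dual cycle is odd and $O(G)=\emptyset$; this step is purely local and needs no topology.

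For the converse I would realize coloring as a parallel transport on the dual graph $\hat{G}$. A coloring of one maximal simplex is a bijection from its $d+1$ vertices onto the color set, and crossing an edge of $\hat{G}$ to an adjacent simplex $\sigma'$ (sharing a $(d-1)$-face $F$) forces the coloring of $\sigma'$: the $d$ colors on $F$ are fixed and the opposite vertex must take the unique missing color. Transporting a fixed base coloring around a closed loop in $\hat{G}$ thus returns a permutation of the colors, defining a monodromy homomorphism $\rho:\pi_1(\hat{G})\to S_{d+1}$, and a global $(d+1)$-coloring exists precisely when $\rho$ is trivial. The key local computation is that the monodromy around the small loop encircling a $(d-2)$-simplex $x$ is the identity if the dual cycle of $x$ is even and the transposition $(a\,b)$ of the two free colors if it is odd; indeed, stepping $\sigma_i=x\cup\{v_i,v_{i+1}\}\to\sigma_{i+1}$ forces the colors of the $v_i$ to alternate, and returning to the base simplex flips $a\leftrightarrow b$ exactly when $\ell$ is odd. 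Given $O(G)=\emptyset$, every small loop therefore has trivial monodromy.

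The last step is to invoke simple connectivity. The small loops around $(d-2)$-simplices are the boundaries of the dual $2$-cells, so they normally generate the kernel of the natural map $\pi_1(\hat{G})\to\pi_1(G)$; when $G$ is simply connected (in particular for any $d$-sphere of dimension $\ge 2$) this means they normally generate all of $\pi_1(\hat{G})$. Since $\rho$ annihilates every small loop, and hence every conjugate and product of small loops, $\rho$ is identically trivial, so the base coloring extends consistently to a global $(d+1)$-coloring and $X(G)=d+1$. I expect the genuine obstacle to be exactly this final step: verifying, inside the purely graph-theoretic framework of discrete manifolds, that the dual $2$-skeleton has precisely the $(d-2)$-simplex loops as its relations, so that $\pi_1(\hat{G})$ modulo these loops really computes $\pi_1(G)$. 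The low-dimensional cases ($d=1$, where the dual cycle is the whole manifold, and $d=2$, Heawood's original setting) should be checked directly to anchor the argument.
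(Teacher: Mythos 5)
Your proposal is correct and follows essentially the same route as the paper's own (very terse) proof, which says only that a minimal coloring of one simplex propagates by forced choices through the dual graph and that consistency requires every small closed loop to be even; your monodromy homomorphism $\rho:\pi_1(\hat{G})\to S_{d+1}$ is exactly the careful version of that argument, and you correctly read $d-1$ as a slip for $d+1$. The one step you flag as the genuine obstacle --- that the small loops around $(d-2)$-simplices normally generate the kernel of $\pi_1(\hat{G})\to\pi_1(G)$ --- is indeed glossed over in the paper as well, so your write-up is if anything more honest about where the work lies.
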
 
\begin{proof}
$X(G)=d-1$ means that coloring one simplex determines the
coloring of the entire graph. In order to have compatibility, we need that going along
a closed loop in the dual graph works and that needs each small closed loop to be even. 
\end{proof} 

\paragraph{}
An interesting example of a positive curvature manifold $G$ different from a 
$4$-sphere is the {\bf complex projective plane} $\mathbb{PC}^2$ which is a real 
$4$-manifold and admits a positive curvature metric. (Of course we still mean here a discrete
graph implementing that 4-manifold and not the actual manifold in differential geometry). 
It would be interesting to know
what the maximal chromatic number can be. We expect it to be the same
than the maximal chromatic number of a $4$-sphere: as the Betti vector is $(1,0,1,0,1)$
and $G$ is simply connected, the usual monodromy constraint does not apply. There could
be a surprise however. 

\paragraph{}
For any $2$-manifold $M$, the Fisk set $O(M)$ is a finite set of vertices.
There are an even number of elements in $O(m)$ because 
the {\bf Euler handshake formula} tells $\sum_{x \in V} |S(x)| = 2 |E|$ for any 
finite simple graph $G=(V,E)$. If there would be an odd number of odd degree vertices,
the sum of the vertex degrees would be odd, which is not possible. 
We can rephrase this by saying that $O(M)$ is a finite union of disjoint $0$-spheres. 

\paragraph{}
Let us now look at the case $d=3$: 

\begin{lemma}
For a $3$-manifold $M$, the set $O(M)$ is a finite union of closed curves. They can 
intersect only in $0$-dimensional parts. In particular, there can not be any lose ends. 
If $O(S(x))$ is a $0$-sphere or empty for every unit sphere $S(x)$, 
then $O(M)$ is a simple closed curve.
\end{lemma}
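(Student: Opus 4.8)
The plan is to analyze $O(M)$ one vertex at a time and reduce everything to the parity statement already established for $2$-manifolds. The central observation is a local dictionary: for an edge $(v,w)$ of $M$, its dual circle is a unit circle inside the $2$-sphere $S(v)$. Indeed $S(v) \cap S(w)$ is exactly the set of common neighbors of $v$ and $w$, which is precisely the unit sphere $S_{S(v)}(w)$ of $w$ computed inside the induced graph $S(v)$. Hence $(v,w)$ is odd in $M$ if and only if the circle $S_{S(v)}(w)$ has odd length, i.e. if and only if $w$ is an odd-degree vertex of the $2$-manifold $S(v)$, i.e. $w \in O(S(v))$. In other words, the set of neighbors of $v$ along odd edges is exactly the Fisk set $O(S(v))$, so the \emph{link of $v$ in the graph $O(M)$} equals $O(S(v))$. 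This identification is what the whole argument turns on.

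First I would prove the general structural claim. Since $S(v)$ is a $2$-sphere, the Euler handshake argument already used for $2$-manifolds shows that $O(S(v))$ has an even number of vertices. By the dictionary above, every vertex $v$ therefore meets an even number of odd edges, so the graph $O(M)$ has all degrees even. An even graph has no degree-$1$ vertices (no loose ends) and its edge set decomposes into edge-disjoint cycles; distinct cycles can share vertices but never edges, so they meet only in $0$-dimensional pieces. This yields the first three assertions at once: $O(M)$ is a finite union of closed curves meeting in points, with no loose ends.

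Next I would treat the hypothesis that $O(S(v))$ is a $0$-sphere or empty for every $v$. Then the degree of $v$ in $O(M)$ is $2$ or $0$. A degree-$0$ vertex carries no odd edge and hence does not belong to the complex $O(M)$, so every vertex of $O(M)$ has degree exactly $2$, and a connected graph with all degrees $2$ is a cycle. To upgrade each such cycle to a genuine $1$-sphere I would use that a $0$-sphere consists of two \emph{non-adjacent} points: if $O(S(v)) = \{a,b\}$, then $a$ and $b$ are not joined by an edge, so no triangle of odd edges can close up at $v$. This excludes cycles of length $3$, and since simple graphs contain no shorter cycle, every connected component of $O(M)$ is a cyclic graph $C_n$ with $n \geq 4$, i.e. a $1$-sphere. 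Thus $O(M)$ is a $1$-manifold; each component is a simple closed curve, and when $O(M)$ is connected this is a single simple closed curve as stated.

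The technical heart is the local identification $S(v) \cap S(w) = S_{S(v)}(w)$ together with the recognition of $O(S(v))$ as the odd-degree vertices of the $2$-sphere $S(v)$; once this dictionary is in place, the remaining work is the parity count inherited from the $2$-manifold case and elementary facts about even graphs and degree-$2$ graphs. The step that most requires care is the passage from ``every vertex has degree $2$'' to ``$1$-manifold'': one must invoke the non-adjacency built into the definition of a $0$-sphere in order to rule out triangles, and one should note that the conclusion is most naturally a disjoint union of simple closed curves, reducing to a single curve exactly when $O(M)$ is connected.
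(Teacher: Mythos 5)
Your proof is correct and follows essentially the same route as the paper: the paper's (very terse) argument rests on exactly your dictionary, namely that an edge $(v,w)$ is critical in $M$ if and only if $w$ is a critical (odd-degree) vertex of the $2$-sphere $S(v)$, so that the parity of $O(S(v))$ forces every vertex of $O(M)$ to have even degree. Your write-up simply supplies the details the paper leaves implicit (the edge-disjoint cycle decomposition, the exclusion of triangles via non-adjacency in a $0$-sphere, and the remark that the final conclusion really gives a disjoint union of simple closed curves, a single one when $O(M)$ is connected).
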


\begin{proof}
The critical edge set intersects at every vertex in an even number of edges. This follows
from the fact that an interior edges in a unit sphere is critical if and only if the
vertex on $S(x)$ is critical. 
\end{proof}

\paragraph{}
The example of the $600$-cell $M$ (which is a $3$-sphere) shows that all edges can 
be part of the Fisk set $O(M)$. The reason is that every vertex in every unit sphere, an
icosahedron has an odd number of $3$ dimensional tetrahedra hinging on. 

\paragraph{}
The following result appears as Proposition~56 in \cite{Fisk1977b} (Fisk did not exactly 
look at the discrete manifold notion considered here but that is not so relevant.)

\begin{lemma}
If $G$ is a $p$-manifold and $H$ is a $q$-manifold with $p,q\geq 2$, then 
$O(G+H) = G+O(H) \cup O(G) + H$. 
\end{lemma}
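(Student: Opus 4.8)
The plan is to reduce the claim to how dual spheres behave under the join, using the unit-sphere identities $S_{G+H}(a)=S_G(a)+H$ for $a\in V(G)$ and $S_{G+H}(b)=G+S_H(b)$ for $b\in V(H)$ recalled in the excerpt. Write the dual sphere of a simplex $\sigma$ in a manifold $M$ as $L_M(\sigma)=\bigcap_{v\in\sigma}S_M(v)$. Every simplex of the join decomposes \emph{uniquely} as $w=\sigma\cup\tau$ with $\sigma$ a (possibly empty) simplex of $G$ and $\tau$ a (possibly empty) simplex of $H$, because $V(G)$ and $V(H)$ are disjoint in $G+H$. Intersecting the two unit-sphere identities vertex by vertex, and using that the full vertex set $V(H)$ survives every factor $S_G(a)+H$ while $V(G)$ survives every factor $G+S_H(b)$, I would first establish the \textbf{join-of-links formula}
\[
L_{G+H}(\sigma\cup\tau)=L_G(\sigma)+L_H(\tau).
\]
This single computation is the technical core; everything else is bookkeeping.

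A point worth flagging at the outset is that $G+H$ is in general \emph{not} a manifold: by the identities above a unit sphere such as $S_G(a)+H$ is a sphere only when $H$ is a sphere, so the join of two genuine manifolds is typically singular. Nevertheless the Fisk variety is still meaningful on $G+H$, because the dual sphere of each codimension-$2$ simplex is a genuine $1$-sphere: it equals $L_G(\sigma)+L_H(\tau)$, a join of links computed \emph{inside} the honest manifolds $G$ and $H$. I would record this so that $O(G+H)$ is unambiguously the set of codimension-$2$ simplices of $G+H$ whose (circular) dual sphere has odd length.

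Next I would run a dimension count. Since $G+H$ has dimension $p+q+1$, the relevant simplices $w$ are $(p+q-1)$-dimensional, so if $\sigma$ is a $j$-simplex and $\tau$ a $k$-simplex then $j+k=p+q-2$. By the manifold property of $G$ and $H$, $L_G(\sigma)$ is a $(p-j-1)$-sphere and $L_H(\tau)$ a $(q-k-1)$-sphere; setting $a=p-j-1\ge-1$ and $b=q-k-1\ge-1$ the relation $j+k=p+q-2$ forces $a+b=0$, so only $(a,b)\in\{(1,-1),(0,0),(-1,1)\}$ can occur. In the case $(0,0)$ both links are $0$-spheres and their join is the $4$-cycle $C_4$, of \emph{even} length, so these simplices never lie in $O(G+H)$. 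In the case $(1,-1)$ the simplex $\tau$ is maximal in $H$, so $L_H(\tau)=\emptyset$ and $L_{G+H}(w)=L_G(\sigma)$, which is odd exactly when the $(p-2)$-simplex $\sigma$ lies in $O(G)$; these $w=\sigma\cup\tau$ are precisely the maximal simplices of $O(G)+H$. The case $(-1,1)$ is symmetric and gives the maximal simplices of $G+O(H)$. As the decomposition $w=\sigma\cup\tau$ is unique, the three cases are disjoint, so the odd codimension-$2$ simplices of $G+H$ are exactly the maximal simplices of $(O(G)+H)\cup(G+O(H))$; passing to generated complexes yields the asserted equality.

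The main obstacle, and the place where $p,q\ge2$ is indispensable, is controlling the boundary of this case analysis. With $p,q\ge2$ the simplices $\sigma,\tau$ in the $(1,-1)$ and $(-1,1)$ cases are honest $(p-2)$- and $(q-2)$-simplices of nonnegative dimension, so $O(G)$ and $O(H)$ are genuine subcomplexes and the joins $O(G)+H$ and $G+O(H)$ have the maximal faces I claim. If $p=1$ the relevant $\sigma$ degenerates to the empty simplex, whose dual sphere is all of $G$, turning the odd condition into a global statement about the length of $G$ and wrecking the local decomposition. I would therefore spend the real effort on (i) the evenness computation $L_G(\sigma)+L_H(\tau)=C_4$ in the $(0,0)$ case, and (ii) the elementary but essential fact that the maximal simplices of a join are exactly the joins of the maximal simplices of the factors, which is what upgrades the simplex-by-simplex matching into the stated equality of complexes.
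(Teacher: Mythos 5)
Your proposal is correct and follows essentially the same route as the paper's proof: decompose each codimension-$2$ simplex of $G+H$ as a join of a simplex from $G$ and one from $H$, observe that the dual sphere is the join of the two links, and run the three-case dimension count in which the middle case gives a join of two $0$-spheres (a $C_4$, hence even) and the two extreme cases reproduce $O(G)+H$ and $G+O(H)$. Your write-up is more careful than the paper's terse version, in particular in isolating the join-of-links formula and in explaining where $p,q\geq 2$ is used, but the underlying argument is the same.
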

\begin{proof}
Every $(d-2)$-simplex $x$ in $G+H$ is either 
the join of a $(p-2)$-simplex in $G$ with a $q$ simples $H$ or the 
join $x+y$ of a $q-2$ simplex $x$ in $H$ with a $p$ simplex $y$ in $G$ 
In both cases, $x+y$ is in $O(G+H)$ if the $p-2$ simplex is in $O(H)$ or $O(G)$. 
A third possibility is that $x+y$ is the join of $(p-1)$-simplex 
$x$ in $G$ with a $q-1$ simplex $y$ in $H$ but then the dual $x+y$ 
is the join of two $0$-spheres which makes it even. 
\end{proof}

\paragraph{}
Fisk shows that if a $3$-sphere the two circles can be linked. 
An example is the 3-sphere $G=C_5 + C_5$. An association to the 
{\bf Hopf fibration} comes up. 
Fisk also shows that it is possible to realize any possible knot 
in $S_3$ as $O(M)$.  This is an interesting combinatorial result
as we take a knot, enclose it with $3$-dimensional tetrahedra in such a way
that every edge has an odd number of such tetrahedra hinging on. Now we
argue that we can continue this to a consistent triangulation of the entire
surrounding $3$-sphere. In Fisks proof Seifert surfaces play a crucial role.
One of the open questions (Problem 9) of Fisk asks whether there is 
an example, where $O(G)$ consists of two linked circles of even length.

\paragraph{}
For us, it had been an interesting question is whether it is possible to {\bf edge refine} a
$3$-sphere $M$ with an inscribed {\bf Fisk knot} $O(M)$ to make the knot $O(M)$ disappear after
the modification. We have noticed no constraints so far. 
In the case $C_5 + C_5$ for example, where we have a link of two circles, 
a first refinement merges the two linked circles of length $5$ to a single circle
of length $6$, a second refinement, then removes that circle. As explained several times already, the
ability to edge refine the interior of a 3-ball implies the 4-color theorem. 
An easier still open question is: is it true that every $3$-sphere can be edge refined to become Eulerian?
In the case $d=2$, the answer was yes and we were able to use a billiard cutting procedure to make 
$M$ Eulerian.

\section{The Fisk manifold}

\paragraph{}
A co-dimension $2$-sphere in a sphere is called a {\bf knot}.
More generally, one can also look at co-dimension $2$-manifolds in a manifold and still call them 
{\bf knots}. They play a role in many different parts of mathematics. Just a year ago, we looked
at the structure of positive curvature manifolds \cite{GroveSearle2020} in the {\bf Grove-Searle case}
\cite{GroveSearle2020}, where the theory of Kobayashi \cite{Kobayashi1972} and 
Conner \cite{Conner1957} assures that any positive curvature manifold $M$ with a $S^1$-
symmetry has a {\bf fixed point set} $F(M)$ of even co-dimension. A theorem of Grove-Searle 
\cite{GroveSearle}  then severely restricts the structure of $M$ if $F(M)$ has co-dimension $2$.
It quite directly implies (without further hard analysis) 
that a positive curvature $2,4,6$ or $8$ manifold of that type
has positive Euler characteristic. 

\paragraph{}
Now, in a completely different set-up of discrete manifolds $M$, one has a co-dimension-2 set
$O(M)$ which can be seen as a union of discrete $(d-2)$-manifolds. 
This set has first been considered by Fisk, who also used the notation $O(M)$. We call it the 
{\bf Fisk set} for graphs and {\bf Fisk variety} if $M$ is a discrete manifold.  It sometimes is a 
{\bf manifold} and would call it then the {\bf Fisk manifold}. Fisk had been looked at 
$3$-manifolds, where $O(M)$ was a {\bf link} or a {\bf  knot}. 
An example is the $3$-sphere $G=C_5 + C_5$ where $O(G)$ 
is a union of two circles. If we take $G=C_5 + C_4$, then $O(G)$ is a single circle. 
For the 4-manifold $G=S_0+C_5+C_4$ which is the join of the previous example, then $O(G)$ is 
the suspension of $C_5$, a 2-manifold with $S_0=( \{a,b\},\emptyset)$. In this case,
we have $O(S(a))=O(S_B)=C_5$. For the other unit spheres, we either have $O(S(x))=C_4$ or 
$O(S(x))=0$. 

\paragraph{}
Here is a result which characterizes that the Fisk set $O(M)$ is a manifold:

\begin{thm}
If $O(S(x))$ is a $(d-3)$-sphere in the $(d-1)$-sphere $S(x)$ for all $x$ of $O(S(x))$ is empty,
then $O(M)$ is a $(d-2)$-manifold in $M$. The statement can be reversed.
\end{thm}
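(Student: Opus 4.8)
The plan is to reduce the global claim to a single local identity that computes the unit sphere of an arbitrary vertex $x$ inside $O(M)$. By the definition of a $(d-2)$-manifold, $O(M)$ is one precisely when every unit sphere $S_{O(M)}(x)$ is a $(d-3)$-sphere, so the whole theorem follows once we show that this unit sphere is nothing other than the Fisk variety $O(S(x))$ of the unit sphere of $x$ in $M$.

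The key computation is a dual-sphere identity. Fix a vertex $x$ together with a $(d-3)$-simplex $y=(y_0,\dots,y_{d-3})$ of the $(d-1)$-sphere $S(x)$. Since $S(x)$ is itself a manifold, the unit sphere of $y_i$ taken inside $S(x)$ is $S_{S(x)}(y_i)=S(x)\cap S(y_i)$, and intersecting over $i$ gives
$$ \bigcap_{i=0}^{d-3} S_{S(x)}(y_i) \;=\; S(x)\cap\bigcap_{i=0}^{d-3} S(y_i). $$
The right-hand side is exactly the dual $1$-sphere of the $(d-2)$-simplex $\{x\}\cup y$ computed inside $M$. Thus the dual circle of $y$ in $S(x)$ and the dual circle of $\{x\}\cup y$ in $M$ are literally the same graph, hence have the same cardinality, so $y$ is odd in $S(x)$ if and only if $\{x\}\cup y$ is odd in $M$. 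This single observation is the heart of the argument.

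From this correspondence I would read off that the odd $(d-2)$-simplices of $M$ through $x$ are exactly the sets $\{x\}\cup y$ with $y$ a maximal simplex of $O(S(x))$, so on the level of top simplices the star of $x$ in $O(M)$ is the cone over $O(S(x))$. I expect the main obstacle to be precisely the passage from this statement about maximal simplices to the statement about the graph-theoretic unit sphere $S_{O(M)}(x)$: one must rule out spurious adjacencies, namely two neighbors $u,v$ of $x$ that become joined in $O(M)$ through an odd simplex not containing $x$ while $\{x,u,v\}$ lies in no odd simplex. The clean way to dispose of this is to verify that $O(M)$ is a flag (Whitney) complex, so that its generated unit spheres and its links coincide, which then yields the desired equality $S_{O(M)}(x)=O(S(x))$ as graphs; this bookkeeping, rather than any deep geometry, is where the care lies.

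With the identity $S_{O(M)}(x)=O(S(x))$ in hand, both directions are immediate. A vertex $x$ belongs to $O(M)$ exactly when $O(S(x))\neq\emptyset$, and for such $x$ the hypothesis forces $O(S(x))$, hence $S_{O(M)}(x)$, to be a $(d-3)$-sphere; since this holds at every vertex of $O(M)$, the graph $O(M)$ meets the defining condition of a $(d-2)$-manifold. Conversely, if $O(M)$ is a $(d-2)$-manifold then each $S_{O(M)}(x)=O(S(x))$ is a $(d-3)$-sphere when $x\in O(M)$ and is empty otherwise, which is exactly the hypothesis. I would close by checking that the degenerate low-dimensional readings are consistent with what the paper already proves directly, namely that $O(M)$ is a finite (even) union of points for $d=2$ and a finite union of closed curves for $d=3$, so that the induction on $d$ carried by the definitions of sphere and manifold needs no separate base-case labor.
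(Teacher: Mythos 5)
Your proposal follows essentially the same route as the paper: both rest on the identity $\bigcap_i S_{S(x)}(y_i) = S(x)\cap\bigcap_i S(y_i)$, which identifies the dual circle of $y$ in $S(x)$ with that of $\{x\}\cup y$ in $M$ and hence gives $S_{O(M)}(x)=O(S(x))$, from which the manifold condition transfers in both directions. The paper asserts this in a single stroke without discussing the flag-complex/spurious-adjacency bookkeeping you flag, so your version is, if anything, slightly more careful on that point.
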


\begin{proof}
If $v_0$ is a vertex in $O(M)$ and $x=(v_0,\dots,v_{d-2})$ is
a maximal $(d-2)$-simplex attached to $v$, then
by definition, the intersection $S(v_1) \cap \cdots \cap S(v_{d-2})$ is
an odd circle in $S(v_0)$ because $S(v_0) \cap S(v_1) \cap \cdots \cap S(v_{d-2})$
is an odd circle in $M$. The unit sphere of $x$ in $O(M)$ is now the
set $O(S(x))$ which is a $d-3$ sphere proving that $O(M)$ is a manifold. 
\end{proof}

\paragraph{}
The interesting thing is that $O(G)$ can be a $2$-manifold in a $4$-manifold $G$
such that $O(S(x))=O(M) \cap S(x)$ is a knot in $S(x)$ for some $x$. 

\section{Coloring 2-spheres}

\paragraph{}
In this section we review the restatement of the $4$-color theorem as the 
statement $\chi(G) \leq 4$ for 2-spheres. This has been done before, 
but now we try to avoid the classical notion of {\bf planar} and 
use the {\bf Kuratowski condition} for planarity as a definition. 
The restatement of the 4-color theorem in terms of 2-spheres is 
more elegant because we do not have to bother
with the notion of planarity which involves conditions about embedded graphs. 

\paragraph{}
The {\bf $4$-color theorem} tells that the chromatic number of a 
planar graph is $4$ or less. We can by Kuratowski ditch the Euclidean reference 
and stay within combinatorics by defining
planar as a graph which does not contain a 1D-refined version of 
the hyper-tetrahedron $K_5$ or the utility graph $P_3 + P_3$. 

\paragraph{}
A $2$-sphere is a finite simple graph such that every
unit sphere $S(x)$ is a cyclic graph of length $4$ or more and such that removing a vertex
produces a contractible graph. Recall that a graph is {\bf contractible} if there exists a vertex 
$v \in V$  such that $G$ and $G-v$ are both contractible. 
The following result readily follows from the 4-color theorem because $2$-spheres are planar: 

\begin{thm}[Sphere coloring theorem] 
A $2$-sphere has chromatic number $3$ or $4$.
\end{thm}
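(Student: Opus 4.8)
The plan is to deduce this directly from the classical four-color theorem once we have established that a 2-sphere is a planar graph in the Kuratowski sense. First I would settle the lower bound: since a 2-sphere is a 2-manifold, Theorem~\ref{Theorem1} already gives $X(G) \geq d+1 = 3$, so no independent argument is needed there. Equivalently, a 2-sphere contains triangles (every maximal simplex is a triangle), so at least 3 colors are forced. The substance of the theorem is therefore the upper bound $X(G) \leq 4$.

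The key step is to show that every 2-sphere, as defined here (a finite simple graph whose unit spheres are cyclic graphs $C_n$ with $n \geq 4$ and which becomes contractible after removing a vertex), is planar under the Kuratowski criterion, i.e. contains no 1D-refinement of $K_5$ and no 1D-refinement of the utility graph $P_3 + P_3$. The route I would take is to exhibit a concrete embedding rather than to check the exclusion of these forbidden minors directly. Using the definition, remove a vertex $v$ to obtain the contractible graph $G - v$; the link $S(v)$ is a cycle $C_n$ bounding the region vacated by $v$. One then argues, by induction along the sequence of contractions guaranteed by contractibility, that $G - v$ admits a planar embedding in a disk with $S(v) = C_n$ as its boundary cycle, after which $v$ can be placed in the outer (or complementary) face and joined to the $n$ boundary vertices without crossings. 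This realizes $G$ as a triangulation of the sphere $S^2$, hence a planar graph.

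Once planarity is in hand, the four-color theorem, quoted as $\chi(G) \leq 4$ for planar graphs, immediately yields $X(G) \leq 4$. Combining with the lower bound $X(G) \geq 3$ gives $X(G) \in \{3,4\}$, and both values occur (the octahedron, an Eulerian 2-sphere, has $X = 3$, while an icosahedron or any non-Eulerian example has $X = 4$), so the statement is sharp.

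The main obstacle I expect is the planarity step itself: translating the purely combinatorial inductive definition of a 2-sphere into an actual crossing-free embedding, or alternatively into a verified absence of the two Kuratowski obstructions. The delicate point is that ``$G - v$ is contractible'' must be leveraged to produce a \emph{disk} embedding with the prescribed boundary cycle $S(v)$, and one has to ensure the contraction moves respect that boundary. This is the one place where the equivalence between the discrete-manifold language and the classical planar-triangulation picture has to be argued carefully rather than invoked; the rest is a direct appeal to the four-color theorem and to Theorem~\ref{Theorem1}.
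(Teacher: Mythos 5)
Your proposal is correct and follows the same overall strategy as the paper: the lower bound comes from the presence of triangles, and the upper bound is a direct reduction to the four-color theorem once planarity of $2$-spheres is established. The only genuine divergence is in how planarity is obtained. The paper deliberately takes the Kuratowski condition (no $1$D-refinement of $K_5$ or of $P_3+P_3$) as the \emph{definition} of planar and verifies it purely combinatorially, by induction on $2$-balls built up from the wheel graph by attaching a vertex along a boundary arc: if the enlarged ball contained a Kuratowski subgraph, so would the smaller one. You instead propose to construct an explicit crossing-free disk embedding of $G-v$ with boundary cycle $S(v)$ and then place $v$ in the complementary face. Both routes work, but note two things. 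First, since the paper's framework states the four-color theorem for Kuratowski-planar graphs, your embedding argument needs Kuratowski's theorem (or a version of the four-color theorem for embedded triangulations) to close the loop, whereas the paper's induction stays entirely inside the combinatorial definition. Second, the delicate point you flag --- that contractibility of $G-v$ must be leveraged to produce a disk embedding respecting the prescribed boundary --- is exactly the work the paper's induction on $2$-balls is doing in disguise; its version is arguably easier to make rigorous because one only has to check that attaching a vertex along a linear boundary subgraph cannot create a forbidden subdivision, rather than track a geometric embedding through the contraction sequence. The paper also records the converse direction (Whitney's characterization of $2$-spheres as maximally planar $4$-connected graphs), which your proof does not need but which is what makes the sphere-coloring statement \emph{equivalent} to, and not merely a consequence of, the four-color theorem.
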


\paragraph{}
This sphere version is actually {\bf equivalent} to the standard $4$ color theorem because
of the following two lemmas:

\begin{lemma}[Whitney] 
$G$ is a $2$-sphere if and only if $G$ is maximally planar, is 
$4$-connected and has more than 5 vertices.
\end{lemma}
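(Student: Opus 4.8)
The plan is to prove both implications by identifying the combinatorial $2$-sphere condition of the paper with the classical statement that $G$ is a triangulation of the topological sphere carrying no separating triangle. The central observation, used in both directions, is that for the Whitney complex the requirement that every unit sphere $S(x)$ be a single cycle is exactly the statement that every $3$-clique of $G$ is a face and that every edge lies in precisely two triangles. Indeed, if $b,c \in S(x)$ are adjacent then $bc$ is an edge of the cycle $S(x)$, so the triangle $xbc$ is the unique $2$-cell attached to $x$ along that edge; conversely a chord of $S(x)$, or an edge lying in three triangles, would destroy the cycle structure. Thus ``every unit sphere is a cycle'' is synonymous with ``no separating triangle'', which for triangulations of $S^2$ is precisely the classical criterion for $4$-connectivity.

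For the forward direction, assume $G$ is a $2$-sphere. First I would establish that $G$ triangulates $S^2$: the cycle condition on unit spheres makes the Whitney complex a closed combinatorial surface, and the vertex-removal clause (that $G-x$ is contractible) forces this surface to have Euler characteristic $2$ and to be simply connected, hence to be $S^2$. A triangulation of $S^2$ embeds in the plane by stereographic projection, so $G$ contains no refined $K_5$ or $P_3 + P_3$ and is planar in the Kuratowski sense; since every face is a triangle it is maximally planar, realizing the extremal edge count $E = 3V-6$. Then $4$-connectivity follows from the clique observation above. Finally, the length-$\geq 4$ requirement gives minimum degree $4$, so $2E \geq 4V$; combined with $E = 3V-6$ this yields $V \geq 6$, so $G$ automatically has more than $5$ vertices (the octahedron being the minimal case).

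For the reverse direction, assume $G$ is maximally planar, $4$-connected, and has more than $5$ vertices. Maximal planarity already means that every face of a plane embedding, including the unbounded one, is a triangle, so $G$ is a triangulation of $S^2$. The $4$-connectivity excludes separating triangles, so by the clique observation every unit sphere $S(x)$ is an induced cycle of length $\deg(x)$, and minimum degree $4$ in a $4$-connected graph makes each such cycle have length $\geq 4$. It then remains to verify the recursive content of the definition, namely that $G-x$ is contractible for a suitable $x$: here I would use that deleting a vertex from a sphere triangulation leaves a triangulated disk, and that a triangulated disk is collapsible, hence contractible in the inductive sense of the paper. Together with the already-verified cycle condition on the $S(x)$, this certifies $G$ as a $2$-sphere.

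The main obstacle is the translation between the two languages rather than any single computation. The two delicate points are (i) showing that a combinatorial closed surface satisfying the vertex-removal clause is genuinely $S^2$ and not some other surface, which is where one must pin down Euler characteristic and simple connectivity discretely, or invoke a discrete classification of surfaces, and (ii) checking in the reverse direction that ``triangulated disk'' meets the paper's recursive definition of \emph{contractible}, which requires a short collapsibility argument rather than merely a homotopy-type statement. Everything else, namely planarity via stereographic projection, the edge count of maximal planar graphs, and the degree bound forcing $V \geq 6$, is standard and routine.
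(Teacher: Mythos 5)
Your proof is essentially correct, but it takes a genuinely different route from the paper's. You translate everything into the classical language of surface triangulations: the pivot of your argument is the equivalence ``every unit sphere is an induced cycle'' $\Leftrightarrow$ ``every triangle is a face (no separating triangle)'', combined with the classical fact that a planar triangulation on at least five vertices is $4$-connected precisely when it has no separating triangle; you then get planarity from an embedding of the triangulated $S^2$, the vertex count from the Euler relation $|E|=3|V|-6$ together with minimum degree $4$, and the sphere recognition from $\chi=2$ plus simple connectivity. The paper instead stays inside its recursive graph-theoretic framework: the forward direction verifies the Kuratowski condition by induction on $2$-balls built up from the wheel graph, and the reverse direction is a direct case analysis of vertex degrees inside $S(x)$ (degree $1$ contradicts maximal planarity, degree $\geq 3$ forces a homeomorphic copy of $K_5$ via $4$-connectivity), followed by an appeal to the classification of discrete $2$-manifolds to rule out positive genus. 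Your route is shorter and leans on well-known classical theorems; the paper's is longer but closer to self-contained within its combinatorial definitions. Three points in your write-up are load-bearing and only gestured at: (a) the ``minimal separators in a sphere triangulation induce cycles'' fact behind the separating-triangle criterion for $4$-connectivity, which you use in both directions without proof; (b) the identification of the Whitney complex with $S^2$ from the vertex-removal clause; and (c) the verification that a triangulated $2$-disk is contractible in the paper's recursive sense. You flag (b) and (c) honestly; (a) deserves the same treatment, since it is the hinge of your whole argument. None of these is a gap in the sense of being false or unfixable, but each would need to be supplied (or cited) for a complete proof.
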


Proof: (i) Assume first that $G$ is a 2-sphere. Then it must contain a vertex $x$ and a unit sphere $S(x)$
leading to a wheel graph $W$ with at least 5 vertices and then an additional vertex $y$ so that $G-y$ is $W$.
This means that $G$ has at least $6$ vertices. It is planar because $G-v$ is a $2$-ball, a finite simple graph
in which every unit sphere is either a cyclic graph or then a linear graph of length $2$ or more. 
By Kuratowski, we must show that $G$ can not contain 1D-refinement of $K_5$ or $K_{3,3}$. We prove that by 
induction. Start with the smallest possible 2-ball, the wheel graph. This clearly does not contain neither $K_5$
nor $K_{3,3}$ as we can list all sub-graphs. Now verify the Kuratowski conditions by induction. Assume it works for
$n$ and take a $2$-ball $G'$ of order $n+1$. It is of the form $G' = G +_A x$ where $x$ is attached to a linear 
subgraph $A$ the boundary circle of $G$. 
If $G'$ contained an embedded utility graph $H+x$, then also $H$ had an embedded utility graph.
If $G'$ contained an embedded complete graph $H+x$, then also $H$ had an embedded complete graph $H$. 
Now show that a 2-sphere is maximally planar: adding an edge $e=(a,b)$ adds a new point $b$ to a unit sphere 
$S(a)$ which is then no more a circle: either $b$ is part of $S(a)$ or not and in both cases the sphere $S_{S(a)}(a)$
has more than $2$ points. 
Finally, we verify that a 2-sphere $G$ is 4-connected. Assume $G$ could be separable by three points $(a,b,c)$. We can 
assume that two are connected as removing an isolated point does not change connectivity. Either $(a,b,c)$ is a linear
graph or then a triangle. In both cases, the complement is connected as can be shown by verifying with respect to 
induction on the size of $G-x$.  \\

      (ii) now assume G is maximally planar and 4 connected and has at least 6 vertices.
Each unit sphere must have 4 or more elements as otherwise it would not be 4-connected. 
The unit sphere can not contain $K_3$ as we otherwise have $G=K_4$ or that $G$ contains
a $K_4$ as a strict subgraph, violating 4-connectivity.
As $S(x)$ has no triangles, it is one dimensional.
The vertex degree is 2 for every point in S(x). If it is 1 for some point $a$ in $S(x)$, 
then there must be an other end point $b$ in $S(x)$ and we can connect $(a,b)$
without violating 4 connectivity nor planarity and so violate maximal planarity. 
If the vertex degree were larger than 2, and the neighbors are a,b,c, this means that S(x) contains a star 
graph y-a,y-b,y-z. Removing x,y and one of the vertices a,b,c must keep the graph connected so that all 
a,b,c are connected. But that means that x,y,a,b,c is homeomorphic to a complete graph $K_5$ which violates
the Kuratowski definition of planarity. We know now $S(x)$ is a circular graph and so that 
G is a discrete manifold. The analogue classification of $2$-manifolds holds in the discrete. The orientability
and the genus determines the type. 
In the positive genus case, G is not simply connected, we can embed the utility graph and G is not planar.
In any 2-manifold we can embed $K_{2,3}$ which has $5$ points $P_3 + P_2$. 
Now, if there is an additional homotopically non-trivial closed loop, we can embed $K_{3,3}$ by 
connecting a 6'th point to $P_3$ along the loop.

\paragraph{}
Whitney writes in \cite{WhitneyCollected} about a verson of this 
result and expressed satisfaction that it can be used to prove the Kuratowski theorem. 
      
\paragraph{} 
The second lemma is hard to attribute. According to \cite{Stromquist}, it is a reduction 
step has been rediscovered by virtually anybody working on graph coloring: 

\begin{lemma}[Folklore]
If we can color maximally planar $4$-connected graphs, we can color all planar graphs. 
\end{lemma}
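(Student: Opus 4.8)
The plan is to reduce the coloring of an arbitrary planar graph to the coloring of a maximally planar $4$-connected graph by two successive reductions, organized as an induction on the number of vertices. The hypothesis supplies the base case: whenever a piece produced by the reduction is already maximally planar and $4$-connected, it is colorable by assumption (and by the Whitney lemma it is exactly a $2$-sphere).

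First I would pass from a general planar graph to a triangulation. Given any planar graph $G$, one can add edges one at a time, keeping the graph planar in the Kuratowski sense used here, until no further edge can be added; the result is a maximal planar graph $\hat{G}$ containing $G$ as a spanning subgraph. Any proper coloring of $\hat{G}$ restricts to a proper coloring of $G$ using no more colors, so it suffices to color maximal planar graphs.

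Second, I would reduce maximal planarity with arbitrary connectivity to maximal planarity with $4$-connectivity. A maximal planar graph on at least four vertices is automatically $3$-connected, so the only way it can fail to be $4$-connected is to possess a $3$-vertex separator, and the key structural fact is that in a triangulation any minimal such separator is a triangle, a \emph{separating triangle} $T=\{a,b,c\}$. Cutting along $T$ splits the graph into two pieces $G_1,G_2$, each of which, after retaining $T$ as a bounding face, is again a maximal planar graph on strictly fewer vertices. By the induction hypothesis each $G_i$ can be colored with four colors; since $T$ receives three distinct colors in each coloring, I can compose the coloring of $G_2$ with a permutation of the four colors so that it agrees with the coloring of $G_1$ on $\{a,b,c\}$, and then glue the two colorings into a single $4$-coloring. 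Repeating until every surviving piece is $4$-connected terminates because the vertex count strictly drops at each cut.

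The main obstacle will be the structural claim underlying the second reduction: that the separators of a triangulation are triangles, and that the two pieces obtained from a separating triangle are themselves triangulations on strictly fewer vertices, so the induction is well founded and the gluing along $T$ is consistent. Once that is in place, the $4$-connected triangulations reached at the bottom of the recursion are precisely the $2$-spheres, colorable by hypothesis; combined with the Sphere coloring theorem this closes the equivalence between the $4$-color theorem and $X(G)\leq 4$ for $2$-spheres.
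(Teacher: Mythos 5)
Your proposal follows the same route as the paper's own (very terse) proof: first add edges to reach a maximal planar graph, then decompose along separating triangles into $4$-connected pieces and glue the colorings back together. You supply the details the paper only sketches --- that minimal $3$-separators of a triangulation are triangles, and that the colorings of the pieces can be matched on the shared triangle by permuting colors --- so the argument is correct and essentially identical in structure.
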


\begin{proof}
Given a planar graph $H$. Make it maximal. If we can color the maximal one, we can 
color $H$. So, we can assume $G$ is maximally planar. Every graph can be decomposed
into $4$-connected pieces. If there is one $4$-connected component, we have a 2-sphere.
If there are n connected $4$-components, make a cut for one.
\end{proof}

\paragraph{}
We originally thought that coloring $2$-spheres is easy
by using Kempe chains along a Reeb foliation \cite{knillreeb}. 
{\bf Kempe chains} famously work well for 
the $5$-color theorem but it is not constructive there as it is a reduction argument
establishing that a degree $5$ vertex in a minimal counter example needing $6$ colors
can not exist when coloring a $2$-sphere. The explicit coloring for the 
upper bound $X(G) \leq 2d+2$ for all $d$-manifolds is constructive in all dimensions and
in particular for $d=2$, where just cut the dual graph of the $2$-sphere. This is a $3$-regular
triangle-free graph. Let us just state this for the record:

\begin{coro}
The process of 6-coloring a $2$-manifold is constructive and can be done fast. 
The same holds for $(2d+2)$-coloring a $d$-manifold. 
\end{coro}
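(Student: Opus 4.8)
The plan is to read this corollary as the algorithmic shadow of Theorem~\ref{Theorem1}: that proof does not merely assert the existence of a $(2d+2)$-coloring, it manufactures one through a finite sequence of standard graph primitives, so what remains is to verify that each primitive runs in time polynomial in $|V(G)|$ for every fixed $d$. First I would record the three data-structural steps. (1) Build the dual graph $\hat{G}$ by enumerating the maximal simplices of $G$, the complete subgraphs $K_{d+1}$, and joining two of them whenever they meet in a $(d-1)$-face; for fixed $d$ the facets can be listed by a bounded local search around each vertex, and testing $(d-1)$-face incidence is a constant-size comparison, so $\hat{G}$ is assembled in polynomial time and has size linear in the number of facets. (2) Extract a spanning forest $T$ of $\hat{G}$ by breadth-first search, linear in the size of $\hat{G}$. (3) Properly $2$-color $T$; since a forest is bipartite, this is again a single breadth-first sweep.

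Next I would carry out the coloring exactly as in the proof of Theorem~\ref{Theorem1}. Restoring the non-tree edges partitions $\hat{V}$ into the two color classes $A,B$, and Theorem~\ref{Theorem1} guarantees, via the Euler--Poincar\'e count $|E|=|V|+b-1>b$, that neither class contains a monochromatic cycle, i.e.\ that $\hat{G}[A]$ and $\hat{G}[B]$ are forests. No backtracking or search over partitions is therefore required: the two induced forests are produced deterministically from the single spanning tree. On each tree of $\hat{G}[A]$ I would propagate a rainbow coloring from a root facet using the first batch of $d+1$ colors, and likewise propagate the second batch of $d+1$ colors over $\hat{G}[B]$; propagation visits every facet once and assigns the forced missing color at each step, so it too is linear in the size of $\hat{G}$. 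Summing the steps gives a total running time polynomial in $|V(G)|$, which is the content of ``constructive and fast''; correctness of the resulting coloring is inherited from Theorem~\ref{Theorem1}.

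For the case $d=2$ the estimate is especially clean and worth isolating: the dual graph of a $2$-manifold is a $3$-regular triangle-free graph, so $|\hat{E}|=\tfrac32|\hat{V}|$, a spanning tree with acyclic co-tree exists by the arboricity bound, and the whole pipeline --- build $\hat{G}$, take a spanning tree, $2$-color it, propagate two batches of three colors --- is linear-time. This yields a $6$-coloring of any $2$-manifold directly, and should be contrasted with minimal ($3$- or $4$-) coloring, where no comparably fast procedure is known.

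The step I expect to be the genuine obstacle is (1) together with the claim that the forest partition is obtained without search. Enumerating the facets is polynomial only for each fixed dimension and the cost grows with $d$, so ``fast'' must be read as polynomial in $|V(G)|$ with the dimension held constant, a caveat I would state explicitly. The more delicate point is that, in general, partitioning the vertex set of a graph into two induced forests is not an easy problem; the efficiency of the procedure therefore rests entirely on the fact that Theorem~\ref{Theorem1} supplies the partition through the spanning-tree construction rather than through an optimization. Confirming that this construction indeed terminates with two forests for the dual graph of every $d$-manifold is the one place where I would lean on the structural content of Theorem~\ref{Theorem1} rather than on generic algorithmics.
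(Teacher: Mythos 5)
Your overall strategy is exactly the paper's: the corollary is stated ``for the record'' with no separate proof, the intended justification being the preceding remark that the coloring of Theorem~1 is explicit (``just cut the dual graph,'' which for $d=2$ is a $3$-regular triangle-free graph). Your added complexity accounting --- facet enumeration is polynomial for each fixed $d$, tree operations are linear --- is a reasonable elaboration of what the paper leaves implicit.

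There is, however, one concrete gap in your step (2). You extract a spanning tree by breadth-first search and then assert that Theorem~1 ``guarantees'' neither color class contains a monochromatic cycle. It does not: the argument in Theorem~1 shows that a monochromatic cycle must consist entirely of non-tree (cut) edges, and then relies on the $b$ cut edges having been \emph{chosen} so that they contain no closed loop. An arbitrary BFS tree gives you no control over its co-tree; for a $3$-regular triangle-free graph the co-tree has $b=|E|-|V|+1=\tfrac12|V|+1$ edges and can perfectly well contain a cycle, and whether that cycle is monochromatic depends on the tree. So the deterministic, search-free character of the algorithm --- the very thing the corollary asserts --- hinges on constructing the \emph{right} spanning tree, and the paper's existence argument for such a cut set (a minimal-counterexample contradiction) is not itself an algorithm. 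Your appeal to ``the arboricity bound'' in the $d=2$ paragraph is the correct repair in spirit (Nash--Williams gives edge arboricity $2$ for $3$-regular triangle-free graphs, and the corresponding spanning tree with acyclic co-tree can be found in polynomial time by greedily growing two edge-disjoint forests or by matroid union), but that is an extra algorithmic ingredient you need to name, not a consequence of BFS. Note also that for $d\geq 3$ the dual graph is $(d+1)$-regular, so a spanning tree with fully acyclic co-tree need not exist at all; there one must settle for the weaker requirement that no co-tree cycle is monochromatic, which is precisely the delicate point you flag in your last paragraph but do not resolve. As it stands your pipeline proves ``polynomial time'' only modulo that unproved selection step.
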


\paragraph{}
The Fisk story indicates that it should be possible to $4$-color a $2$-sphere $S$
constructively in polynomial time by implementing the algorithm to minimally 
cover a $3$-ball having $S$ as the boundary. This would especially allow to 4-color
in polynomial time any planar graph with $n$ vertices constructively.

\bibliographystyle{plain}

\end{document}